
\documentclass{amsart}%
\usepackage{amssymb}
\usepackage{amsmath}
\usepackage{latexsym}
\usepackage{hyperref}
\usepackage{amsfonts}
\usepackage{graphicx}%
\setcounter{MaxMatrixCols}{30}
\newtheorem{theorem}{Theorem}[section]

\newtheorem{remark}[theorem]{Remark}
\newtheorem{proposition}[theorem]{Proposition}
\newtheorem{question}[theorem]{Question}

\newcommand{\be}{\begin{equation}}
\newcommand{\ee}{\end{equation}}
\newcommand{\bea}{\begin{eqnarray}}
\newcommand{\eea}{\end{eqnarray}}

\begin{document}
\title{On a problem of Yau regarding a higher dimensional generalization of the
Cohn-Vossen inequality}
\author{Bo Yang}
\address{Department of Mathematics, University of California San Diego, La Jolla, CA 92093}
\email{{b5yang@math.ucsd.edu}}

\begin{abstract}
We show that a problem by Yau in \cite{Yau} can not be true in
general. The counterexamples are constructed based on the recent
work of Wu and Zheng \cite{WZ}.
\end{abstract}

\maketitle

\section{Introduction}

Shing-Tung Yau asked the following question in \cite{Yau}:

\begin{question} \label{problem 1.1} Given an n-dimensional complete manifold
with nonnegative Ricci curvature, let $B(r)$ be the geodesic ball
around some point $p$. Let $\sigma_{k}$ be the $k$-th elementary symmetric
function of the Ricci tensor. Then is it true that $r^{-n+2k}
\int_{B(r)} \sigma_{k}$ has an upper bound when $r$ tends to
infinity? This should be considered as a generalization of the
Cohn-Vossen inequality.
\end{question}

\vskip 1mm

In the K\"{a}hler category one would like to ask the following
similar question.

\begin{question} \label{Kahler category}  On a complete K\"{a}hler manifold
with complex dimension $n$, if we denote $\omega$ and Ric the
K\"{a}hler form and the Ricci form respectively, one would like to
ask if $r^{-2n+2k} \int_{B(r)} Ric^{k} \wedge \omega^{n-k}$ is
bounded for any $1 \leq k \leq n$ when $r$ goes to infinity.
\end{question}

In this note we exhibit counterexamples to Question \ref{problem
1.1} in the case of $1<k \leq n$ via the recent interesting work of
Wu and Zheng \cite{WZ}. We will show that for any complex dimension
$n \geq 2$ and any $2 \leq k < n$, there exists a $U(n)$ invariant
complete K\"{a}hler metrics on $\mathbb{C}^{n}$ with nonnegative
bisectional curvature such that $r^{-2n+2k} \int_{B(r)} \sigma_{k}$
is unbounded when $r$ large (See Theorem \ref{question 1.1 not
true}). We also prove that Question \ref{Kahler category} is true
for all $U(n)$ invariant complete K\"{a}hler metrics on
$\mathbb{C}^{n}$ with nonnegative bisectional curvature (See Theorem
\ref{question 1.2 true}).

\markright{On a problem of Yau}

\section{Results of Wu and Zheng}

In the section, we collect some of the results from the recent work of
Wu and Zheng \cite{WZ} since they will be used in our constructions of
counterexamples to Question \ref{problem 1.1}. Unless stated otherwise all
results in this section are due to Wu and Zheng \cite{WZ}.

Wu and Zheng \cite{WZ} develops a systematic way
to construct $U(n)$ invariant complete K\"{a}hler metrics on
$\mathbb{C}^{n}$ with positive bisectional curvature. One of the
motivation behind their work is the uniformization conjecture by Yau
\cite{Yau2}. The conjecture states that a complete noncompact
K\"{a}hler manifold with positive bisectional curvature is
biholomorphic to the complex Euclidean space. See
\cite{ChenTangZhu},\cite{ChauTam1}, \cite{ChauTam2}, \cite{ChauTam3}
and reference therein for some recent progress towards Yau's
uniformization conjecture. See also \cite{K},\cite{Cao1}, and
\cite{Cao2} for some earlier works on the construction of
rotationally symmetric complete K\"{a}hler metrics with positive
curvature on $\mathbb{C}^{n}$.

We follow the notations in \cite{WZ}. Let $z=(z_1,\cdots,z_n)$ be the standard
coordinate on $\mathbb{C}^{n}$ and $r=|z|^{2}$. A $U(n)$ invariant
K\"{a}hler metric on $\mathbb{C}^{n}$ has the K\"{a}hler form
\begin{equation}
\omega=\frac{\sqrt{-1}}{2} \partial \overline{\partial} p(r)
\label{Kahler form}
\end{equation} where $p \in C^{\infty}
[0,+\infty)$. Under the local coordinates, the metric has
components:
\begin{equation}
g_{i\overline{j}}=f(r)\delta_{ij}+f'(r) \overline{z}_i z_j.
\end{equation}
We further denote:
\begin{equation}
f(r)=p'(r),\ \ \ \ h(r)=(rf)'.  \label{def of f and h}
\end{equation}
Then the K\"{a}hler from $\omega$ will give a complete metric if and
only if
\begin{equation}
f>0, \ \  h>0, \ \ \int_{0}^{+\infty}  \frac{\sqrt{h}}{\sqrt{r}}
dr=+\infty. \label{being complete}
\end{equation}

Now if we compute the components of the curvature tensor at
$(z_1,0,\cdots,0)$ under the orthonormal frame $\{
e_1=\frac{1}{\sqrt{h}} \partial_{z_1},e_2=\frac{1}{\sqrt{f}}
\partial_{z_2}, \cdots, e_n=\frac{1}{\sqrt{f}} \partial_{z_n} \}$,
then define $A, B, C$ respectively by:
\begin{equation}
A=R_{1\overline{1}1\overline{1}}=-\frac{1}{h} (\frac{rh'}{h})',\
B=R_{1\overline{1}i\overline{i}}=\frac{f'}{f^2}-\frac{h'}{hf},\
C=R_{i\overline{i}i\overline{i}}=2R_{i\overline{i}j\overline{j}}=-\frac{2f'}{f^2},
\label{ABC one form}
\end{equation} where we assume $2 \leq i \neq j \leq n$,
It is easy to check all other components of curvature tensor are
zero.

Let $\mathcal{M}_{n}$ denote the space of all $U(n)$ invariant
complete K\"{a}hler metrics on $\mathbb{C}^{n}$ with positive
bisectional curvature.

\begin{theorem}[\textbf{Characterization of $\mathcal{M}_{n}$ by the $ABC$
function}] Suppose $n \geq 2$ and h is a smooth positive function
on $[0,+\infty)$ satisfying (\ref{being complete}), then the form
defined by (\ref{Kahler form}) gives a complete K\"{a}hler metric
with positive (nonnegative) bisectional curvature if and only if
$A,B,C$ are positive (nonnegative).
\end{theorem}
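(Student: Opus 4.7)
The plan is to exploit the $U(n)$-symmetry of the metric to reduce nonnegativity of the bisectional curvature to an algebraic inequality at a single diagonal point, and then to read off that inequality in terms of $A$, $B$, $C$.

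First I would reduce to a representative point $p_0 = (\sqrt{r}, 0, \ldots, 0)$: $U(n)$ acts by isometries and every orbit in $\mathbb{C}^{n}$ meets such a diagonal point, so it suffices to check bisectional curvature at all $p_0$. At $p_0$ the vectors $e_1, \ldots, e_n$ of \eqref{ABC one form} form an orthonormal frame, and invariance under the $U(n-1)$ stabilizer of $p_0$ together with the K\"ahler symmetries of the curvature tensor forces the only possibly nonzero components to be those exhibited in \eqref{ABC one form}. Necessity is then immediate: evaluating $R(X, \bar X, Y, \bar Y)$ on $(X,Y)=(e_1,e_1),(e_1,e_i),(e_i,e_i)$ for $i\geq 2$ recovers $A, B, C$, so nonnegativity (positivity) of the bisectional curvature forces $A,B,C\geq 0$ (resp.\ $>0$).

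For sufficiency, I would decompose arbitrary $(1,0)$-vectors as $X = x_1 e_1 + X'$ and $Y = y_1 e_1 + Y'$ with $X', Y' \in \text{span}(e_2, \ldots, e_n)$. Using the K\"ahler symmetries $R_{a\bar b c\bar d} = R_{c\bar b a\bar d} = R_{a\bar d c\bar b}$, the value $B$ enters through the four index patterns $R_{1\bar 1 i\bar i}$, $R_{i\bar i 1\bar 1}$, $R_{1\bar i i\bar 1}$, $R_{i\bar 1 1\bar i}$, while the transverse block determined by $C$ is precisely the curvature tensor of a complex space form of constant holomorphic sectional curvature $C$. Summing these contributions yields
\begin{equation*}
R(X, \bar X, Y, \bar Y) \;=\; A\,|x_1|^{2}|y_1|^{2} \;+\; B\,\Phi_B \;+\; \tfrac{C}{2}\bigl(|X'|^{2}|Y'|^{2} + |\langle X', Y'\rangle|^{2}\bigr),
\end{equation*}
where $\Phi_B = |x_1|^{2}|Y'|^{2} + |y_1|^{2}|X'|^{2} + 2\re(x_1 \bar y_1 \gamma)$ and $\gamma = \sum_{i\geq 2}\bar x_i y_i$.

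The main obstacle is the indefinite cross term in $\Phi_B$, since $\gamma$ can be a complex number of arbitrary phase. I would handle it via Cauchy--Schwarz $|\gamma|\leq|X'||Y'|$ followed by AM--GM, giving
\begin{equation*}
\Phi_B \;\geq\; |x_1|^{2}|Y'|^{2} + |y_1|^{2}|X'|^{2} - 2|x_1||y_1||X'||Y'| \;=\; \bigl(|x_1||Y'| - |y_1||X'|\bigr)^{2} \;\geq\; 0.
\end{equation*}
Together with the manifest nonnegativity of the $A$- and $C$-summands, this shows that $A,B,C\geq 0$ implies $R(X,\bar X,Y,\bar Y)\geq 0$ for all $X,Y$. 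For the strict statement, a short case analysis on which of $x_1, y_1, X', Y'$ vanish shows that at least one of the three summands is strictly positive whenever $X,Y$ are both nonzero; hence $A, B, C > 0$ yields positive bisectional curvature.
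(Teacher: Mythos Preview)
The paper does not prove this statement; it is quoted from Wu--Zheng \cite{WZ}, along with the curvature components \eqref{ABC one form} and the assertion that all other components vanish. So there is no proof in the present paper to compare against.

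Your argument is correct and is the natural direct verification. The $U(n-1)$ isotropy at $p_0$ together with the K\"ahler symmetry $R_{a\bar b c\bar d}=R_{c\bar b a\bar d}$ really does force the curvature into the three-parameter form you use (for instance $R_{1\bar i 1\bar j}$ sits in $\mathrm{Sym}^2\bar V$, which has no $U(n-1)$-invariants, while on the transverse block the two invariants $\delta_{ij}\delta_{kl}$ and $\delta_{il}\delta_{kj}$ of $V\otimes\bar V\otimes V\otimes\bar V$ are identified by the K\"ahler symmetry, leaving the single parameter $C$). Your expansion of $R(X,\bar X,Y,\bar Y)$ is accurate, the Cauchy--Schwarz bound disposes of the indefinite cross term in $\Phi_B$, and the short case split for strict positivity is sound.
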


If we define another function $\xi \in C^{\infty} [0,+\infty)$ by
\begin{equation}
\xi(r)=-\frac{r h'(r)}{h}, \label{def of xi}
\end{equation} then $h$ determines $\xi$ uniquely. On the other hand,
note that $\xi$ determines $h$ by $h(r)=h(0) e^{\int_{0}^{r}
\frac{\xi(t)}{t} dt}$, hence $\omega$ up to scaling. The following
interesting theorem in \cite{WZ} reveals that the space
$\mathcal{M}_{n}$ is in fact quite large.

\begin{theorem}[\textbf{Characterization of $\mathcal{M}_{n}$ by the $\xi$
function}] Suppose $n\geq 2$ and h is a smooth positive function on
$[0,+\infty)$, then the form defined by (\ref{Kahler form}) gives a
complete K\"{a}hler metric with positive bisectional curvature on
$\mathbb{C}^n$ if and only if $\xi$ defined by (\ref{def of xi})
satisfying \begin{equation} \xi(0)=0,\ \ \ \xi^{\prime}>0,\ \ \ \xi<1.
\end{equation}
\end{theorem}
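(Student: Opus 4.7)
The plan is to invoke the $ABC$-characterization of the previous theorem: given smooth positive $h$, the form gives a complete K\"{a}hler metric with positive bisectional curvature if and only if $A,B,C>0$ and the completeness integral in (\ref{being complete}) diverges. I will re-express each of these four conditions as a condition on $\xi$.

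The main computational tools are two integration-by-parts identities. Rewriting $\xi=-rh'/h$ as $rh'+\xi h=0$ gives $(rh)'=h+rh'=(1-\xi)h$, hence $rh(r)=\int_{0}^{r}(1-\xi(s))h(s)\,ds$; combined with $(rf)'=h$, i.e.\ $rf(r)=\int_{0}^{r}h(s)\,ds$, subtraction yields
\begin{equation*}
r(f-h)=\int_{0}^{r}\xi(s)h(s)\,ds,\qquad r\bigl(h-(1-\xi)f\bigr)=\int_{0}^{r}[\xi(r)-\xi(s)]h(s)\,ds.
\end{equation*}
A direct differentiation gives $A=-\frac{1}{h}(-\xi)'=\xi'/h$, so $A>0\Leftrightarrow\xi'>0$, while smoothness of $h$ at $0$ with $h(0)>0$ forces $\xi(0)=0$ automatically. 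Granted $\xi(0)=0$ and $\xi'>0$, one has $\xi>0$ on $(0,\infty)$; the first identity then gives $f>h$, i.e.\ $C=\frac{2(f-h)}{rf^{2}}>0$, and the second identity, using $\xi(r)>\xi(s)$ for $s<r$, gives $B=\frac{h-(1-\xi)f}{rf^{2}}>0$. Conversely, $A>0$ directly recovers $\xi'>0$.

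The main obstacle is to show that completeness is equivalent to $\xi<1$ once $\xi'>0$ is in force. Here the useful identity is
\begin{equation*}
(\sqrt{rh})'=\frac{(rh)'}{2\sqrt{rh}}=\frac{(1-\xi)h}{2\sqrt{rh}}=\frac{1-\xi}{2}\sqrt{\frac{h}{r}}.
\end{equation*}
If $\xi<1$, then since $\xi$ is increasing, $\int_{r_{0}}^{r}\xi(s)/s\,ds\leq\log(r/r_{0})$, so $h(r)\geq C/r$ for large $r$ and $\int^{\infty}\sqrt{h/r}\,dr=\infty$. Conversely, if $\xi(r_{0})\geq 1$ for some $r_{0}$, strict monotonicity of $\xi$ supplies $r_{1}>r_{0}$ and $\epsilon>0$ with $\xi(r)\geq 1+\epsilon$ on $[r_{1},\infty)$; integration then forces $h(r)\leq Cr^{-(1+\epsilon)}$ on this range, so $\sqrt{h/r}\leq C'r^{-1-\epsilon/2}$ is integrable at infinity and completeness fails. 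Combining the three steps yields the equivalence in both directions.
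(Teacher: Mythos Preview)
The paper does not give its own proof of this theorem; it is quoted from Wu--Zheng, and the nearest thing in the paper is the sketch of the \emph{necessary} direction for the nonnegative analogue (Proposition~\ref{xi function}): $\xi(0)=0$ from the definition, $A=\xi'/h$ forces $\xi'\ge 0$, and $\xi\le 1$ by contradiction with completeness via $h(r)\le c\,r^{-(1+\delta_0)}$. Your argument reproduces exactly these steps for the necessary half and for the completeness equivalence, so on that portion you agree with the paper's sketch.

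Where your write-up goes beyond the paper is the sufficiency direction. The paper says nothing about why $\xi(0)=0$, $\xi'>0$, $\xi<1$ imply $B>0$ and $C>0$; you supply this via the two integral identities
\[
r(f-h)=\int_{0}^{r}\xi(s)h(s)\,ds,\qquad r\bigl(h-(1-\xi)f\bigr)=\int_{0}^{r}\bigl[\xi(r)-\xi(s)\bigr]h(s)\,ds,
\]
which are clean and correct (the first from $(rh)'=(1-\xi)h$ and $(rf)'=h$, the second by subtracting $(1-\xi(r))\,rf$ from $rh$). Since $C=\dfrac{2(f-h)}{rf^{2}}$ and $B=\dfrac{h-(1-\xi)f}{rf^{2}}$, strict positivity of $\xi$ and strict monotonicity give $B,C>0$ on $(0,\infty)$. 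One small point you might add: at $r=0$ the integral identities degenerate, but a direct limit gives $B(0)=\xi'(0)/(2h(0))>0$ and $C(0)=\xi'(0)/h(0)>0$, so nothing is lost. Overall your proof is correct and in fact more complete than the paper's own treatment, while matching its method on the overlapping part.
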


Fix a metric $\omega$ in $\mathcal{M}_{n}$, the geodesic distance
between the origin and a point $z \in \mathbb{C}^{n}$ is:
\begin{equation}
s=\int_{0}^{r}  \frac{\sqrt{h}}{2\sqrt{r}} dr.  \label{s distance}
\end{equation} where $r=|z|^2$. We denote $B(s)$ the ball in
$\mathbb{C}^n$ centered at the origin and with the radius $s$ with
respect to $\omega$. It is further shown in \cite{WZ} that:
\begin{equation}
\operatorname{Vol} (B(s))=c_n (rf)^n.   \label{vol formula}
\end{equation} where $c_n$ is the Euclidean volume of the Euclidean
unit ball in $\mathbb{C}^n$.

Using Theorem 2.2 Wu and Zheng further proved the following
estimates on volume growth of geodesics ball $B(s)$ and the first
Chern number for metrics in $\mathcal{M}_{n}$. Note that an estimate
on volume growth of geodesics ball in the general case has been
proved by Chen and Zhu \cite{ChenZhu2}.

\begin{proposition}[\textbf{Volume growth estimates for metrics in
$\mathcal{M}_{n}$}] $r f = f(1)+2\sqrt{h(1)} (s-s(1))$ for $r>1$ and
$rf \leq s^2$ for any $r \geq 0$. So there exists a constant $C$
such that:
\begin{equation} C s^{n} \leq \operatorname{Vol}(B(s)) \leq c_n s^{2n}.
\end{equation} for $s$ large enough.
\end{proposition}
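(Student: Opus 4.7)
The plan is to treat $rf$ as a function of the geodesic radius $s$, using the substitution $s = \int_{0}^{r} \frac{\sqrt{h}}{2\sqrt{r}}\,dr$, and then exploit the two inequalities already encoded in the characterization of $\mathcal{M}_n$: non-positivity of $f'$ (which comes from $C \geq 0$) and $\xi < 1$.

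First, from $(rf)' = h$ and $\frac{ds}{dr} = \frac{\sqrt{h}}{2\sqrt{r}}$ the chain rule yields
\[
\frac{d(rf)}{ds} = (rf)' \cdot \frac{dr}{ds} = h \cdot \frac{2\sqrt{r}}{\sqrt{h}} = 2\sqrt{rh}.
\]
Next I observe that $(rh)' = h + rh' = h(1-\xi)$. Since the metric lies in $\mathcal{M}_n$, Theorem 2.2 gives $\xi < 1$, hence $rh$ is strictly increasing in $r$. In particular $rh \geq h(1)$ for $r \geq 1$, so $\frac{d(rf)}{ds} \geq 2\sqrt{h(1)}$ for $s \geq s(1)$. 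Integrating from $s(1)$ to $s$ and using $rf\big|_{r=1} = f(1)$ produces the claimed formula (read as a lower bound)
\[
rf \geq f(1) + 2\sqrt{h(1)}\,(s - s(1)) \qquad (r > 1).
\]

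For the upper bound $rf \leq s^2$, I rewrite the derivative in terms of $\sqrt{rf}$:
\[
\frac{d\sqrt{rf}}{ds} = \frac{h}{2\sqrt{rf}} \cdot \frac{2\sqrt{r}}{\sqrt{h}} = \sqrt{\frac{h}{f}}.
\]
Now the nonnegativity of $C = -2f'/f^2$ forces $f' \leq 0$, so $h = f + rf' \leq f$, and therefore $\frac{d\sqrt{rf}}{ds} \leq 1$. Integrating from $s=0$, where $rf = 0$, gives $\sqrt{rf} \leq s$, i.e.\ $rf \leq s^2$, for every $r \geq 0$.

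Finally, feeding both estimates into the volume formula $\operatorname{Vol}(B(s)) = c_n (rf)^n$ yields the desired sandwich $C s^n \leq \operatorname{Vol}(B(s)) \leq c_n s^{2n}$ for $s$ large, with $C = (f(1) + 2\sqrt{h(1)}(s - s(1)))^n/s^n$ tending to a positive constant like $(2\sqrt{h(1)})^n$. I don't foresee a serious obstacle here — the entire argument is a differentiation-and-monotonicity check. The only conceptually nontrivial step is recognizing that the condition $\xi < 1$ from the Wu–Zheng characterization is precisely what makes $rh$ monotone, which drives the lower bound.
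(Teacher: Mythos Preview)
Your argument is correct. The paper itself does not supply a proof of this proposition; it is quoted as a result of Wu--Zheng, so there is no in-paper proof to compare against directly. Your derivation is presumably along the lines of the original: the two key monotonicity facts you isolate, namely $(rh)'=h(1-\xi)>0$ (from $\xi<1$) and $h\le f$ (from $C\ge 0$, i.e.\ $f'\le 0$), are exactly the content of the Wu--Zheng characterization needed here.

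One remark: you are right to read the first displayed relation as a lower bound rather than an equality. Strict equality $rf=f(1)+2\sqrt{h(1)}(s-s(1))$ would force $rh$ to be constant for $r>1$, contradicting $(rh)'=h(1-\xi)>0$ in $\mathcal{M}_n$; the statement as printed is evidently a typo for $\geq$, and only the inequality is used for the conclusion $Cs^n\le\operatorname{Vol}(B(s))$.
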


\begin{proposition}[\textbf{Bounding the first Chern number for metrics in
$\mathcal{M}_{n}$}] Given any $\omega$ in $\mathcal{M}_{n}$ with $n
\geq 1$, we have
\begin{equation}  \int_{C^{n}} (Ric)^n=c_n (\frac{n
\xi(+\infty)}{\pi})^n \leq c_n (\frac{n}{\pi})^{n}.
\end{equation} while $Vol(B(s))=c_n v^n$.
\end{proposition}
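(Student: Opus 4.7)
My plan is to observe that $Ric$ itself is a $U(n)$-invariant $(1,1)$-form of the same radial shape as $\omega$, merely built from a different potential, and then apply the same mechanism that produced the volume formula $\operatorname{Vol}(B(s)) = c_n(rf)^n$ to collapse $\int_{\mathbb{C}^n} Ric^n$ into a boundary term in $r$.

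A direct computation from the rank-one structure $g_{i\bar j} = f\delta_{ij} + f'\bar z_i z_j$ yields $\det(g_{i\bar j}) = f^{n-1}h$. Setting $\Phi(r) := -\log(f^{n-1}h)$, one then has $Ric = -\sqrt{-1}\,\partial\bar\partial\log\det g = \sqrt{-1}\,\partial\bar\partial\Phi(r)$, which is again of the same radial type as $\omega = \frac{\sqrt{-1}}{2}\partial\bar\partial p(r)$. In the orthonormal frame of Section~2 at $(z_1, 0, \ldots, 0)$, the two Ricci eigenvalues come out as $(r\Phi')'/h$ and $\Phi'/f$, which match $A + (n-1)B$ and $B + \tfrac{n}{2}C$ respectively (this is a short check using (\ref{ABC one form}) together with $(rf'/f)' = -hB$, which follows from $rf' = h - f$). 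Consequently, passing to polar coordinates (the integrand is $U(n)$-invariant, hence a function of $r = |z|^{2}$), the integrand collapses to a perfect derivative:
\[
(r\Phi'(r))'\,(\Phi'(r))^{n-1}\,r^{n-1} \;=\; \frac{1}{n}\,\frac{d}{dr}\bigl[(r\Phi'(r))^{n}\bigr],
\]
so $\int_{\mathbb{C}^n} Ric^n$ reduces to a dimensional constant multiplied by $\bigl[(r\Phi'(r))^{n}\bigr]_{r=0}^{r=+\infty}$.

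It then remains to evaluate $r\Phi'(r)$ at the endpoints. Using $rf' = h-f$ (from $h=(rf)'$) and $\xi = -rh'/h$,
\[
r\Phi'(r) \;=\; -(n-1)\,\frac{rf'}{f} - \frac{rh'}{h} \;=\; (n-1)\Bigl(1 - \frac{h(r)}{f(r)}\Bigr) + \xi(r).
\]
At $r=0$ we have $h(0) = f(0)$ and $\xi(0)=0$, so $r\Phi'(r)\to 0$. At $r=+\infty$, the monotonicity of $\xi$ (from the $\xi$-characterization of $\mathcal{M}_n$) guarantees that $\xi(+\infty) \in [0,1]$ exists; integrating the ODEs $(\log h)' = -\xi/r$ and $(rf)' = h$ then yields the asymptotic $h/f \to 1-\xi(+\infty)$, whence $r\Phi'(r) \to n\,\xi(+\infty)$. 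Pinning down the overall constant (which comes from the polar-coordinate factor $c_n\,n\,r^{n-1}$ together with $\omega^n = n! f^{n-1} h \, dV_{\mathrm{Eucl}}$) produces the stated equality, and the bound $\xi(+\infty)\le 1$ gives the stated inequality.

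The main technical obstacle is the asymptotic $h/f \to 1-\xi(+\infty)$. When $\xi(+\infty) < 1$ it is immediate, since both $h$ and $rf = \int_0^{r}\! h\,dt$ behave like $r^{-\xi(+\infty)}$ and $r^{1-\xi(+\infty)}$ up to constants; but the borderline case $\xi(+\infty) = 1$ (where $h \sim c/r$ while $f \sim c\,(\log r)/r$) requires a more careful asymptotic estimate to confirm that $h/f$ really does tend to $0$, and a matching check that the polar integration stays finite at the upper limit.
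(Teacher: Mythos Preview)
The paper does not actually prove this proposition; it is quoted from Wu--Zheng \cite{WZ} as one of the background results collected in Section~2, so there is no in-paper argument to compare against directly. That said, your approach is correct and is the natural one: since $\det g = f^{n-1}h$, the Ricci form is $\frac{\sqrt{-1}}{2}\,\partial\bar\partial\,\Phi(r)$ in the paper's normalization (the extra factor of $\pi$ in the stated formula comes from the Chern-class normalization Wu--Zheng use), hence of the same radial type as $\omega = \frac{\sqrt{-1}}{2}\,\partial\bar\partial\, p(r)$. The identical mechanism that collapses $\int_{B(s)}\omega^n$ to $n!\,c_n(rp')^n$ then collapses $\int_{B(s)} Ric^n$ to a dimensional constant times $(r\Phi')^n$, and your endpoint identity $r\Phi' = (n-1)(1-h/f) + \xi$ is right.

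The ``main technical obstacle'' you flag is not a real obstacle. Write $h/f = (rh)/(rf)$ and recall $(rh)' = (1-\xi)h$, $(rf)' = h$. Since $rh$ is nondecreasing one has $h \ge c/r$ for large $r$, so $rf = \int_0^r h \to \infty$ always. If also $rh\to\infty$, L'H\^opital gives $h/f \to 1-\xi(+\infty)$ immediately; if instead $rh$ stays bounded (which, from $rh = h(1)\exp\!\int_1^r \frac{1-\xi}{t}\,dt$, forces $\xi(+\infty)=1$), then $h/f \to 0 = 1-\xi(+\infty)$ trivially. Either way the limit is $1-\xi(+\infty)$, with no need to split into the cases $\xi(+\infty)<1$ and $\xi(+\infty)=1$ by hand.
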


In order to construct more examples and compute the scalar curvature
curvature of metrics in $\mathcal{M}_{n}$ in a more convenient way,
Wu and Zheng \cite{WZ} introduced another function $F$ in the
following way: First we define $x=\sqrt{rh}$ and a nonnegative
function $y$ of $r$ by
\begin{equation}
y(0)=0, \ \ \ \ \ {x^{\prime}}^{2}+{y^{\prime}}^{2}=\frac{h}{4r}, \ \ \ \ y' > 0.  \label{def of y}
\end{equation}
One can check that $x(r)$ is strictly increasing and then we may
define $F(x)$ a function on $[0,x_0)$ by $y=F(x)$, where
\begin{equation}
x_0^2=\lim_{r \rightarrow +\infty} rh=h(1)e^{\int_{1}^{+\infty}
\frac{1-\xi}{r} dr}.   \label{def of x0}
\end{equation}
Extending $F$ to $(-x_0,x_0)$ by letting $F(x)=F(-x)$, one can check
that $F$ is a smooth, even function on $|x|<x_0$. Starting with such
a $F$ satisfying certain conditions, one can recover the metric
$\omega$ in a geometric way. See Section 5 in \cite{WZ} for details.
This result is summarized as the following theorem.

\begin{theorem} [\textbf{Characterization of
$\mathcal{M}_{n}$ by the $F$ function}] Suppose $n\geq 1$, there is
a one to one correspondence of between the set $\mathcal{M}_{n}$ and
the set of $\mathcal{F}$ of smooth, even function $F(x)$ defined on
$(-x_0,x_0)$ satisfying
\begin{equation} F(0)=0,\ \ \ \ F''>0,\ \ \ \ \lim_{x \rightarrow x_0} F(x)=+\infty.
\end{equation}
\end{theorem}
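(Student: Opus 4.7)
The strategy is to realize both directions of the bijection explicitly via the defining ODE $(x')^2 + (y')^2 = h/(4r)$, which says that the curve $r \mapsto (x(r), y(r))$ in the plane has arc length equal to the geodesic distance $s(r)$. The whole argument is controlled by the key dictionary
\begin{equation*}
F'(x) \,=\, \frac{y'(r)}{x'(r)} \,=\, \frac{\sqrt{\xi(2-\xi)}}{1-\xi}, \qquad \xi \,=\, 1 - \frac{1}{\sqrt{1+F'(x)^2}},
\end{equation*}
obtained by a direct calculation from $x^2 = rh$ and $\xi = -rh'/h$; this is a smooth strictly increasing bijection $\xi \in [0,1) \leftrightarrow F' \in [0,\infty)$.

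\emph{Forward direction.} Given $\omega \in \mathcal{M}_n$, Theorem 2.3 yields $\xi(0)=0$, $\xi'>0$, $\xi<1$. From $2xx' = (rh)' = h(1-\xi) > 0$, the map $x\colon [0,\infty) \to [0, x_0)$ is a smooth increasing diffeomorphism; the ODE \eqref{def of y} then determines a smooth $y(r)$ with $y(0)=0$ and $y' \geq 0$, and $F(x) := y(r(x))$ satisfies $F(0)=0$. The dictionary makes $F'$ a smooth increasing function of $\xi$, so $\xi'>0$ together with $x'>0$ gives $F''>0$. Completeness \eqref{being complete} is equivalent to $\int_0^{x_0} \sqrt{1+F'(x)^2}\,dx = +\infty$; since $\sqrt{1+F'^2} \leq 1+F'$ when $F'\geq 0$, boundedness of $F$ would make this integral finite, so $\lim_{x \to x_0} F(x) = +\infty$.

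\emph{Backward direction.} Given $F \in \mathcal{F}$, combining $x^2 = rh$ with $(x')^2(1+F'(x)^2) = h/(4r)$ yields the scalar ODE
\begin{equation*}
\frac{d \ln r}{dx} = \frac{2\sqrt{1+F'(x)^2}}{x} \qquad \text{on } (0, x_0).
\end{equation*}
Evenness of $F$ gives $F'(0)=0$, so the right-hand side behaves like $2/x$ near $x=0$; and $F\to\infty$ at $x_0$ forces $\int^{x_0} \sqrt{1+F'(t)^2}/t\,dt = +\infty$. After fixing a normalization, integration produces a smooth diffeomorphism $r\colon (0,x_0) \to (0,\infty)$; set $h(r) := x(r)^2/r$. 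The inverted dictionary $\xi = 1 - 1/\sqrt{1+F'^2}$ is then smooth in $r$ with $\xi(0)=0$, $\xi<1$ (automatic from finiteness of $F'$), and $\xi'>0$ (from $F''>0$); Theorem 2.3 produces $\omega \in \mathcal{M}_n$ whose associated $F$-function, computed via the forward map, recovers the given $F$.

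\emph{Main obstacle.} The hard part is smoothness across the origin. In the forward direction, $x^2 = rh(r)$ is smooth in $r$ with $(x^2)'(0) = h(0) > 0$, so $r$ is a smooth function of $x^2$; likewise $y$ is smooth in $r$ because $(y')^2 = h(\xi/r)(2-\xi)/4$ is a smooth positive function of $r$ (using $\xi(0)=0$ so that $\xi/r$ is smooth). Composing, $F$ is a smooth function of $x^2$, equivalently smooth and even on $(-x_0, x_0)$. In the backward direction, integrating the ODE near $x=0$ gives $\ln r = 2\ln x + O(x^2) + \text{const}$, so $r = cx^2(1+O(x^2))$ and $h(r) = x^2/r \to 1/c > 0$ at $r=0$, with higher-order smoothness of $h$ at the origin following by iterating the Taylor expansion.
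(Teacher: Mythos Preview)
The paper does not itself prove this theorem; it is quoted from Wu--Zheng, and only the forward half of the nonnegative-curvature generalization (Theorem~3.2) is written out here, with the reconstruction from $F$ deferred to Section~5 of \cite{WZ}. Your argument follows exactly the route the paper uses in that proof: the same dictionary $1+F'(x)^2=(1-\xi)^{-2}$ (the paper's equation~(3.3)) controls everything, and strict convexity of $F$ is read off from strict monotonicity of $\xi$. Your backward direction via the scalar ODE $d\ln r/dx = 2\sqrt{1+F'^2}/x$ and the smoothness analysis at the origin supply details the paper omits entirely.

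One small gap: your argument that completeness forces $F(x_0)=+\infty$ (``boundedness of $F$ would make $\int_0^{x_0}\sqrt{1+F'^2}\,dx$ finite'') is only valid when $x_0<\infty$; if $x_0=\infty$ the upper bound $\int_0^\infty(1+F')\,dx$ diverges regardless of whether $F$ is bounded. In that case $F(x_0)=+\infty$ follows instead directly from strict convexity: $F''>0$ and $F'(0)=0$ give $F'(x)\ge F'(1)>0$ for $x\ge1$, so $F(x)\to\infty$. The paper's version (equation~(3.4)) sidesteps the case split by bounding $F(x_0)$ below by $\sqrt{1-(1-\xi(+\infty))^2}$ times the (infinite) total geodesic distance. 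Two further minor points: the $\xi$-characterization you invoke is the paper's Theorem~2.2, not~2.3; and since that theorem is stated only for $n\ge 2$, the case $n=1$ needs the separate remark that completeness alone forces $\xi<1$ (as in the proof of Proposition~3.1), after which your argument goes through unchanged.
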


\vskip 1mm

Denote $v=rf$, one can rewrite $s$ and $\operatorname{Vol}(B(s))$ in terms of $F$:
\begin{equation}
s=\int_{0}^{x} \sqrt{1+(F'(\tau))^2} d \tau, \ \ \ \operatorname{Vol}(B(s))=c_n
v^n=c_n (\int_{0}^{x}  2\tau \sqrt{1+(F'(\tau))^2} d\tau)^n.
\label{distance and volume}
\end{equation}

Rewrite $A$, $B$, and $C$ defined in (\ref{ABC one form}) in terms
of F:
\begin{equation}
A=\frac{F' F''}{2x(1+{F'}^2)^2}, \ \ \
B=\frac{x^2}{v^2}-\frac{1}{v\sqrt{1+{F'}^2}},\ \ \
C=\frac{2}{v}-\frac{2x^2}{v^2}.  \label{ABC another form}
\end{equation}

Recall the scalar curvature at the point $z=(z_1,0,\cdots,0)$ is
given by
\begin{equation}
R=A+2(n-1)B+\frac{1}{2}n(n-1)C.  \label{scalar curvature}
\end{equation}

Using (\ref{scalar curvature}),(\ref{ABC another form}),
(\ref{distance and volume}) and a careful integration by parts, Wu
and Zheng \cite{WZ} proved the following relation between average
scalar curvature decay and volume growth of geodesic balls. See also
\cite{ChenZhu2} for a related result on any complete K\"{a}hler
manifold with positive bisectional curvature.

\vskip 1mm

\begin{proposition}
[\textbf{Estimates on average scalar
curvature for metrics in $\mathcal{M}_{n}$}]

Given any K\"{a}hler metric $\omega$ in $\mathcal{M}_{n}$ with $n
\geq 2$, there exists a constant $c>0$ such that \begin{equation}
\frac{1}{c(1+v)} \leq \frac{1}{\operatorname{Vol}(B(s))} \int_{B(s)} R(s) w^n \leq
\frac{c}{1+v}.
\end{equation} while $\operatorname{Vol}(B(s))=c_n v^n$.
\end{proposition}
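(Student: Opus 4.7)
The plan is to exploit $U(n)$-invariance to reduce the integral $\int_{B(s)} R\,\omega^n$ to a one-variable integral in $x$, and then to integrate by parts three times. Using $\tfrac{d}{d\tau} v^n = n v^{n-1}\cdot 2\tau\sqrt{1+{F'}^2}$ from (\ref{distance and volume}), the integral takes the form
\begin{equation*}
\int_{B(s)} R\,\omega^n \;=\; (\mathrm{const})\cdot n\int_0^{x} R(\tau)\, v^{n-1}(\tau)\cdot 2\tau\sqrt{1+{F'}^2(\tau)}\, d\tau,
\end{equation*}
and I would then decompose $R = A + 2(n-1)B + \tfrac{1}{2}n(n-1)C$ as in (\ref{scalar curvature}), substituting the $F$-expressions (\ref{ABC another form}) for $A$, $B$, $C$.

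Each of the three resulting pieces is then attacked by integration by parts via a tailored identity: for the $A$-piece, $\frac{F' F''}{(1+{F'}^2)^{3/2}} = -\frac{d}{dx}(1+{F'}^2)^{-1/2}$; for the $\tfrac{2}{v}$-part of the $C$-piece, $\frac{d}{dx} v^{n-1} = 2(n-1)\tau v^{n-2}\sqrt{1+{F'}^2}$; and for the cross terms $\tau^3 v^{n-3}\sqrt{1+{F'}^2}$ (appearing in both $I_B$ and $I_C$ with combined coefficient $-2(n-1)(n-2)$, and automatically absent when $n=2$), the identity $\tau v^{n-3}\sqrt{1+{F'}^2} = \tfrac{1}{2(n-2)}\tfrac{d(v^{n-2})}{d\tau}$. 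If the intermediate $\int_0^x \tau v^{n-2}\,d\tau$ contributions from the three pieces cancel in pairs --- which I expect to verify by direct computation --- one obtains the closed form
\begin{equation*}
\int_0^x R\, v^{n-1}\cdot 2\tau\sqrt{1+{F'}^2}\, d\tau \;=\; n\,v^{n-1}(x) - \frac{v^{n-1}(x)}{\sqrt{1+{F'}^2(x)}} - (n-1)\,x^2\, v^{n-2}(x),
\end{equation*}
so that dividing by $\operatorname{Vol}(B(s)) = c_n v^n$ yields
\begin{equation*}
\frac{1}{\operatorname{Vol}(B(s))}\int_{B(s)} R\,\omega^n \;=\; \frac{(\mathrm{const})}{v}\left[(n-1)\Bigl(1-\tfrac{x^2}{v}\Bigr) + \Bigl(1 - \tfrac{1}{\sqrt{1+{F'}^2(x)}}\Bigr)\right].
\end{equation*}

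From this formula the bounds follow quickly. Both bracketed terms are nonnegative (using $v\geq x^2$ from (\ref{distance and volume}) and ${F'}^2\geq 0$) and bounded above by $n-1$ and $1$, which gives the upper bound $c/(1+v)$ once the small-$v$ regime is handled by continuity of the average of $R$ over $B(s)$. The lower bound is where the bisectional curvature hypothesis is used in an essential way: since $F''>0$ and $F'(0)=0$, $F'$ is strictly increasing and positive on $(0,x_0)$, so for any fixed $x_1>0$ one has $1 - \tfrac{1}{\sqrt{1+{F'}^2(x)}} \geq 1 - \tfrac{1}{\sqrt{1+{F'}^2(x_1)}} > 0$ uniformly for $x\geq x_1$; choosing $x_1$ with $v(x_1)=1$ controls the $v\geq 1$ regime, while $v\leq 1$ is handled by continuity and positivity of $R$ at the origin. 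I anticipate the main obstacle to be verifying the threefold integration-by-parts cancellation --- a mechanical but delicate computation whose clean outcome is precisely what makes the final estimate so sharp.
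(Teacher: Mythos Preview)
Your proposal is correct and follows essentially the same route the paper attributes to Wu--Zheng: reduce to a one-variable integral via $dv = 2\tau\sqrt{1+{F'}^2}\,d\tau$, split $R$ into its $A$, $B$, $C$ parts using (\ref{ABC another form}), and integrate by parts (the $A$-piece via $\tfrac{d}{d\tau}(1+{F'}^2)^{-1/2}$ exactly as the paper later does in (\ref{hoho2})). Your anticipated cancellation of the $\int_0^x \tau v^{n-2}\,d\tau$ terms does occur, your closed form $n v^{n-1} - v^{n-1}(1+{F'}^2)^{-1/2} - (n-1)x^2 v^{n-2}$ is correct (including the $n=2$ case), and your extraction of the two-sided bound from the positivity and monotonicity of $F'$ is precisely the mechanism that drives the estimate.
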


%

\section{Counterexamples to Question 1.1}

Let $\overline{\mathcal{M}}_{n}$ denote the space of all $U(n)$
invariant complete K\"{a}hler metrics on $\mathbb{C}^{n}$ with
nonnegative bisectional curvature. First we state a generalization
of Theorem 2.2 to the space $\overline{\mathcal{M}}_{n}$.

\begin{proposition} [\textbf{Characterization of $\overline{\mathcal{M}}_{n}$ by
the $\xi$ function}] \label{xi function}  Suppose $n\geq 2$ and h is
a smooth positive function on $[0,+\infty)$, then the form defined
by (\ref{Kahler form}) gives a complete K\"{a}hler metric with
nonnegative bisectional curvature if and only if $\xi$ defined by
(\ref{def of xi}) satisfying
\begin{equation}  \xi(0)=0,\ \ \ \xi' \geq 0,\ \ \ \xi \leq 1.
\end{equation}
\end{proposition}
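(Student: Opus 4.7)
The proposition is the non-strict analog of Wu--Zheng's Theorem 2.2, and my plan is to follow their proof with strict inequalities weakened to non-strict ones throughout. Two issues must be addressed separately: the equivalence of $A,B,C\geq 0$ with $\xi'\geq 0$, and the equivalence of completeness with $\xi\leq 1$. The condition $\xi(0)=0$ is automatic since $\xi=-rh'/h$ and $h$ is smooth and positive at the origin. Throughout I would use the identities $rh'=-\xi h$, $h=(rf)'$, and the derived $rf'=h-f$.

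For the curvature conditions, substituting these identities into (\ref{ABC one form}) and rearranging gives $A=\xi'/h$, $C=2(f-h)/(rf^2)$, and $B=(h-(1-\xi)f)/(rf^2)$. The equivalence $A\geq 0\iff \xi'\geq 0$ is immediate. For $C\geq 0$, observe that $\xi(0)=0$ together with $\xi'\geq 0$ forces $\xi\geq 0$, hence $h'\leq 0$, so $h$ is nonincreasing; the trivial inequality $rh(r)\leq\int_0^r h(t)\,dt=rf(r)$ then yields $f\geq h$. For $B\geq 0$, set $\phi=h-(1-\xi)f$; a short computation shows that $\phi$ satisfies the first-order ODE $(r\phi)'=r\xi' f$ with $\phi(0)=0$. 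Integration gives
\[
\phi(r)=\frac{1}{r}\int_0^r \xi'(t)\,f(t)\,t\,dt,
\]
which is nonnegative whenever $\xi'\geq 0$ (using that $f>0$ follows from $h>0$).

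For the completeness equivalence, the key identity is $(\log rh)'=(1-\xi)/r$, so $\xi\leq 1$ is exactly the statement that $rh$ is nondecreasing. If $\xi\leq 1$, then $rh\geq h(1)>0$ for $r\geq 1$, hence $\sqrt{h/r}=\sqrt{rh}/r\geq \sqrt{h(1)}/r$ and the completeness integral diverges. Conversely, assuming $\xi'\geq 0$, if $\xi(r_0)>1$ for some $r_0$, then $\xi\geq 1+\varepsilon$ on $[r_0,\infty)$ by monotonicity; integrating $(\log rh)'\leq -\varepsilon/r$ gives $rh(r)\leq C\,r^{-\varepsilon}$, so $\sqrt{h/r}\leq C'\,r^{-1-\varepsilon/2}$ is integrable at infinity, contradicting completeness.

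\textbf{Main obstacle.} There is no substantial obstacle: the argument is a weakening of Wu--Zheng's and all the computations are routine. The subtlety worth noting is that the curvature conditions $A,B,C\geq 0$ do not individually force $\xi\leq 1$ (indeed, $B\geq 0$ becomes automatic as soon as $\xi>1$), so the bound $\xi\leq 1$ must be extracted from completeness together with the monotonicity $\xi'\geq 0$, rather than from nonnegativity of bisectional curvature alone.
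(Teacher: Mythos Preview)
Your proposal is correct and follows essentially the same approach as the paper: both defer to Wu--Zheng's argument, use $A=\xi'/h$ for the monotonicity of $\xi$, and obtain $\xi\leq 1$ by contradiction from the completeness integral (\ref{being complete}) after noting that $\xi(r_0)>1$ forces $h\leq c\,r^{-(1+\delta_0)}$. The paper's proof only sketches the necessary direction and refers to \cite{WZ} for the rest, whereas you spell out the sufficiency as well---in particular your ODE computation $(r\phi)'=r\xi' f$ for $\phi=h-(1-\xi)f$ to get $B\geq 0$, and the monotonicity of $rh$ to get completeness, are details the paper leaves implicit.
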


\begin{proof}[Proof of Proposition \ref{xi function}]

The original proof of Theorem 2.2 due to Wu and Zheng \cite{WZ}
works here. Now we only sketch the necessary part. First from (\ref{def of xi}) we know $\xi(0)=0$.
Note that (\ref{def of xi}) and Theorem 2.1 imply
\begin{equation}
A=\frac{\xi^{\prime}}{h} \geq 0
\end{equation} which leads to $\xi^{\prime} \geq 0$.

To prove $\xi \leq 1$, argument by contradiction as in \cite{WZ}. Assume $\lim_{r \rightarrow +\infty}=b>1$, then take
$\delta_{0}>0$ such that $1+\delta_{0}<b$. It follows that there exists $r_0>0$ with $\xi(r_0) \geq 1+\delta_{0}$. Thus
integrating (\ref{def of xi}) leads to
$h(r)=h(0) \exp{\int_{0}^{r} \frac{\xi}{r} dr} \leq \frac{c}{r^{1+\delta_0}} $ which contradicts to the completeness of the metric (\ref{being complete}).
\end{proof}

It also follows from the original proof of Proposition 2.3 and 2.4 due to Wu and Zheng
that the same conclusion holds for the space
$\overline{\mathcal{M}}_{n}$. Namely, for any metric $\omega$ in
$\overline{\mathcal{M}}_{n}$, $C s^{n} \leq \operatorname{Vol}(B(s)) \leq c_n
s^{2n}$ holds for $s$ sufficiently large. and $\int_{C^{n}} (Ric)^n
\leq c_n (\frac{n}{\pi})^{n}$ is true. We remark here that the
estimate on lower bounds of the volume growth of $B(s)$ here can not
be true for an arbitrary complete noncompact K\"{a}hler manifolds
with nonnegative bisectional curvature. For example, take
$\Sigma_{1} \times \mathbb{CP}^{1} \times \cdots \times
\mathbb{CP}^{1}$ where $\Sigma_{1}$ is a capped cylinder on one end
and $\mathbb{CP}^{1}$ is the complex projective plane with the
standard metric.

Next we state another generalization of Theorem 2.5 to
$\overline{\mathcal{M}}_{n}$.

\vskip 1mm

\begin{theorem} [\textbf{Characterization of $\overline{\mathcal{M}}_{n}$
by the $F$ function}] \label{F function}  Suppose $n\geq 1$, there
is a partition of the set $\overline{\mathcal{M}}_{n} \setminus
{\{g_{e}\}}=S_1 \cup S_2 \cup S_3$ where $g_{e}$ is the standard
Euclidean metric on $\mathbb{C}^n$ such that:

(1) $S_1$ has a one to one correspondence with the set of
$\mathcal{F}$ of smooth, even function $F(x)$ on $(-\infty,+\infty)$
defined above satisfying
\begin{equation} F(0)=F'(0)=0,\ \ \ \ F'' \geq 0,\ \ \ \ F'(\infty)< +\infty, \ \ \  F(\infty) = +\infty.
\end{equation}
$S_1$ consists of nonflat K\"{a}hler metrics in
$\overline{\mathcal{M}}_{n}$ whose geodesic balls have Euclidean
volume growth.

(2) $S_2$ has a one to one correspondence with the set of
$\mathcal{F}$ of smooth, even function $F(x)$ on $(-x_0,x_0)$ (where
$x_0$ is either finite or $+\infty$) satisfying
\begin{equation} F(0)=F'(0)=0,\ \ \ \ F'' \geq 0,\ \ \ \ \ F'(x_0)=F(x_0)=+\infty.
\end{equation}
$S_2$ includes K\"{a}hler metrics in
$\overline{\mathcal{M}}_{n}$ whose geodesic balls
have strictly less than Euclidean volume growth and bisectional
curvatures in the radial direction strictly positive along a
sequence of points in $\mathbb{C}^n$ tending to infinity.

(3) For any metric $\omega \in S_3$, there exists a positive real
number $r_0$ such that $r_0=\inf \{ r: \xi(r)=1 \} $ and a
corresponding positive real number $x_0$ such that there exists a
smooth even function $F(x)$ defined on $(-x_0,x_0)$ such that
\begin{equation} F(0)=F'(0)=0,\ \ \ \ F'' \geq 0,\ \ \ \ \
F'(x_0)=+\infty,\ \ \ \ F(x_0)<\infty,
\end{equation} 
$S_3$ is
the set of metrics with geodesic balls having half Euclidean volume
growth and whose bisectional curvatures in the radial direction
vanish outside a compact set. A standard example in complex dimension 1 is a capped cylinder on one end.
\end{theorem}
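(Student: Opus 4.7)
The plan is to mimic the proof of Theorem 2.5 from \cite{WZ} while tracking where the strict inequalities $\xi' > 0$, $\xi < 1$, and $F'' > 0$ may degenerate. Given $\omega \in \overline{\mathcal{M}}_n \setminus \{g_e\}$, Proposition \ref{xi function} supplies a nondecreasing $\xi$ with $\xi(0) = 0$ and $\xi \leq 1$, and nonflatness forces $\xi \not\equiv 0$ (otherwise $h$ is constant, giving the Euclidean metric). Since $\xi$ is monotone and bounded by $1$, the basic dichotomy is: either $\xi(r) < 1$ for every $r$, or else $\xi \equiv 1$ on $[r_0, +\infty)$ with $r_0 := \inf\{r : \xi(r) = 1\} \in (0, +\infty)$. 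The first alternative will produce $S_1 \cup S_2$ and the second $S_3$.

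In the first case, $(rh)' = h(1-\xi) > 0$ keeps $x(r) = \sqrt{rh}$ strictly increasing and the Wu--Zheng construction of $F$ goes through on the full range $(-x_0, x_0)$, with $x_0 := \lim_{r\to\infty} \sqrt{rh} \in (0, +\infty]$, and (\ref{ABC another form}) now yields only $F'' \geq 0$. If $x_0 < +\infty$, completeness $s = \int_0^{x_0} \sqrt{1+(F')^2}\,dx = +\infty$ forces $F'(x_0^-) = +\infty$, and then the pointwise bound $F' \geq \sqrt{1+(F')^2} - 1$ forces $F(x_0^-) = +\infty$ as well, producing the finite-$x_0$ subcase of $S_2$. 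If $x_0 = +\infty$, a direct asymptotic comparison gives $v/s^2 \to (1+F'(\infty)^2)^{-1/2}$, so $F'(\infty) < +\infty$ precisely characterizes Euclidean volume growth (hence $S_1$), whereas $F'(\infty) = +\infty$ yields $v/s^2 \leq 2x/s \to 0$ and the remaining subcase of $S_2$.

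When $r_0 < +\infty$, $(rh)' \equiv 0$ on $[r_0, +\infty)$ forces $rh \equiv x_0^2 := r_0 h(r_0)$, and the profile curve $r \mapsto (x(r), y(r))$ becomes a vertical ray above $x = x_0$ for $r \geq r_0$; hence $F$ is defined only on $(-x_0, x_0)$. From $dx/dr \to 0$ and $dy/dr = \sqrt{h/(4r)} > 0$ at $r_0^-$ one reads off $F'(x_0^-) = +\infty$ and $F(x_0^-) = y(r_0) < +\infty$. On the cylindrical portion $r \geq r_0$, integrating $v' = x_0^2/r$ against $ds/dr = x_0/(2r)$ gives $v = v(r_0) + 2x_0(s - s(r_0))$, so $\operatorname{Vol}(B(s)) \sim c_n (2x_0)^n s^n$ (half-Euclidean), while $A = \xi'/h \equiv 0$ on $(r_0, +\infty)$ gives the vanishing of radial bisectional curvature outside a compact set.

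The converse direction is Wu and Zheng's reconstruction of $\omega$ from $F$ applied verbatim (with strict inequalities relaxed) to give the bijections for $S_1$ and $S_2$; for $S_3$ the statement is only an existence assertion, since the cylindrical profile on $\{r \geq r_0\}$ is not captured by $F$. I expect two points to be the main obstacles. First, verifying in $S_2$ that the metric genuinely has radial bisectional curvatures positive along a sequence going to infinity: if not, $\xi$ would be eventually constant below $1$, making $F'$ eventually constant, contradicting either $F'(\infty) = +\infty$ or $x_0 < +\infty$. Second, the smoothness analysis at the gluing radius $r_0$ in $S_3$: matching the left- and right-hand $k$-jets of $h$ at $r_0$ against the cylindrical $h = x_0^2/r$ forces all derivatives of $\xi - 1$ to vanish at $r_0^-$, and checking this is compatible with the smoothness of $F$ on $(-x_0, x_0)$ is the most delicate piece of the argument.
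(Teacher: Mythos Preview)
Your proposal is correct and follows essentially the same route as the paper: both start from the $\xi$-characterization, split on whether $r_0 = \inf\{r:\xi(r)=1\}$ is infinite (giving $S_1\cup S_2$, further split by the value of $\xi(+\infty)$, equivalently $F'(\infty)$) or finite (giving $S_3$), and read off the volume growth via L'H\^opital on $v/s^2$. Your second ``obstacle'' about smoothness at $r_0$ is not actually needed---the statement for $S_3$ only asserts the existence of $F$ on $(-x_0,x_0)$, which corresponds to $r\in[0,r_0)$ where $x(r)$ is strictly increasing and smooth, so no gluing analysis enters.
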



\begin{proof}[Proof of Theorem \ref{F function}]

The proof of Theorem \ref{F function} is based on a modification of
Theorem 2.5 due to Wu and Zheng. From Proposition \ref{xi function},
we know for any K\"{a}hler metric in $\overline{\mathcal{M}}_{n}$,
there exists a corresponding $\xi(r)$ on $[0,+\infty)$ with
$\xi(0)=0, \xi' \geq 0$, and $\xi \leq 1$. Denote $r_0=\inf {\{ r:
\xi(r)=1 \}}$.

Recall the definition of $x$ and $y$ in (\ref{def of y}), $x=\sqrt{rh}$ and ${x^{\prime}(r)}^2+{y^{\prime}(r)}^2=\frac{h}{4r}$ with $y(0)=0$ and $y^{\prime} \geq 0$. It is easy to check:
\begin{equation}
\frac{dx}{dr}=(1-\xi) \sqrt{\frac{h}{4r}},   \label{dx wrt dr}
\end{equation} then we know $x(r)$ and $y(r)$ are both nondecreasing with respect to $r$.

\textbf{(Case I)} $r_0=+\infty$. From the definition of $x_0$ in (\ref{def of x0}) and (\ref{dx wrt dr})
we know $x(r)$ is strictly increasing on $[0,+\infty)$, then we can define $F(x)$ by $y=F(x)$ on $x \in (-x_0,x_0)$ after an even extension by letting F(-x)=F(x). It is not hard to see that
\begin{equation}
F(0)=0,\ \ \ F'(x) \geq 0, \ \ \ 1+[F'(x)]^2=\frac{1}{(1-\xi)^2}.  \label{about F}
\end{equation}
Recall that $0 \leq \xi(r) \leq 1$ is nondecreasing on
$(-\infty,+\infty)$, we conclude that $F^{\prime \prime} \geq 0$.

Moreover, (\ref{dx wrt dr}) and (\ref{about F}) implies:
\begin{eqnarray}
\lim_{x \rightarrow x_0} F(x) &=&\int_{0}^{x_0}  \sqrt{\frac{1}{(1-\xi)^2}-1}  dx  \label{F(x0)}\\
&=& \int_{0}^{+\infty}  \sqrt{1-(1-\xi)^2} \sqrt{\frac{h}{4r}} dr   \nonumber \\
&\geq& \sqrt{1-(1-\xi(+\infty))^2} \int_{0}^{+\infty} \sqrt{\frac{h}{4r}} dr.  \nonumber
\end{eqnarray}
Note that the integral in the last step of (\ref{F(x0)}) is distance function (\ref{s distance}). we conclude $F(x_0)=\infty$
if and only if $\xi(+\infty)>0$. Note that the latter condition is satisfied when $\omega$ is nonflat.

We further divide our discussion into two subcases:

\textbf{(Subcase Ia)}  $0<\xi(+\infty)<1$. In this case we have $F^{\prime}$ is bounded on $(-x_0,x_0)$ and $x_0=+\infty$.
Moreover, we will prove that the geodesic balls of $(\mathbb{C}^n, \omega)$ has
Euclidean volume growth. We follow the method of Wu and Zheng (See P528 of \cite{WZ}). Note that (\ref{s distance}),(\ref{vol formula}), $(rf)'(r)=h$ and $(rh)'(r)=h(1-\xi)$, it follows from the
L'Hospital's rule that:
\begin{eqnarray}
\lim_{s \rightarrow +\infty}  \frac{\operatorname{Vol} (B(s))}{s^{2n}} &=& \lim_{r \rightarrow +\infty} \frac{c_n (rf)^n}{s^{2n}}  \label{volume growth} \\
&=&\lim_{r \rightarrow +\infty} c_n  (\frac{ \sqrt{rf} }{s})^{2n} \nonumber \\
&=& c_n  (1-\xi(+\infty))^{4n}  \nonumber
\end{eqnarray}

\textbf{(Subcase Ib)}  $\xi(+\infty)=1$, It follows from the (\ref{volume growth}) that in this case the geodesic balls of $(\mathbb{C}^n, \omega)$ has strictly less than Euclidean volume growth. Since $A=\frac{\xi^{\prime}}{h}$, $\xi(0)=0$, and $\xi(+\infty)=1$ we also know that bisectional
curvatures in the radial direction strictly positive along at least a sequence of points in $\mathbb{C}^n$ tending to infinity.

\vskip 1mm

\textbf{(Case II)} $r_0>0$ is finite. Note that $(rh)'=h(1-\xi)$, we
conclude that $x_0=\lim_{r \rightarrow +\infty} \sqrt{rh}$ is finite
and $x_0^2=r_0 h(r_0)$. This implies that $F(x)$ is well defined on
$(-x_0,x_0)$ with $F(x_0)<+\infty$. Since $A=\frac{\xi^{\prime}}{h}$
we conclude that bisectional curvatures in the radial direction
vanishes outside a compact set in $\mathbb{C}^n$. Next we proceed to
show that the geodesic balls of $(\mathbb{C}^n, \omega)$ has half
Euclidean volume growth. Again the methods follows from Wu and Zheng
(See P528 of \cite{WZ}).

\begin{eqnarray}
\lim_{s \rightarrow +\infty}  \frac{\operatorname{Vol} (B(s))}{s^{n}} &=& \lim_{r \rightarrow +\infty} c_n  (\frac{ rf }{s})^{n}  \label{half volume growth}\\
&=& \lim_{r \rightarrow +\infty} c_n  (2 \sqrt{rh})^{n}  \nonumber \\
&=&  2c_n x_0.  \nonumber
\end{eqnarray}

Denote $S_1$, $S_2$, and $S_3$ the sets of metrics in the above three cases (Subcase Ia), (Subcase Ib), and (Case II) respectively, we have proved Theorem \ref{F function}.

\end{proof}

Next we gives some more explicit description of $S_3$. Given any metric
$\omega$ in $S_3$, $\frac{d (rh)}{dr}=(1-\xi) h$, $h=(rf)'$ and $\xi(r)=1$ when
$r>r_0$,  then:
\begin{equation}  rf |_{r_0}^{r}=\int_{r_0}^{r} \frac{r_0 h(r_0)}{r}
dr,
\end{equation} which further implies:
\begin{equation}  rf=x_0^2 \ln{\frac{r}{r_0}}+r_0 f(r_0).
\label{rf}
\end{equation}

Now we compute A, B, C with (\ref{ABC one form}) in Section 2 when $r \geq r_0$:

\begin{equation}
A=-\frac{1}{h} (\frac{rh'}{h})'=\frac{{\xi}'}{h}=0, \label{A for S3}
\end{equation}
\begin{eqnarray}
B &=&  \frac{f'}{f^2}-\frac{h'}{h f}=\frac{1}{r}
(\frac{(rf)'-f}{f^2}-\frac{r h'}{h f}) \label{B for S3} \\  &=& \frac{1}{r}
(\frac{h}{f^2}-\frac{1-\xi}{f})=\frac{h}{r f^2}=\frac{x_0^2}{r^2
f^2}. \nonumber
\end{eqnarray}

\begin{eqnarray}
C=-\frac{2f'}{f^2}=(-2) \frac{h-f}{r f^2} =2 \frac{r f-rh}{(r
f)^2}=2 \frac{x_0^2 (\ln{\frac{r}{r_0}}-1)+r_0 f(r_0) }{[x_0^2
\ln{\frac{r}{r_0}}+r_0 f(r_0)]^2}  \label{C for S3}
\end{eqnarray}

We also see the distance function for metrics in $S_3$:
\begin{equation}
s(r)=\int_{0}^{r_0} \sqrt{\frac{h}{4r}} dr+ \frac{x_0}{2} \ln{\frac{r}{r_0}}.  \label{s for S3}
\end{equation}

Now one can estimate the average of $A$, $B$, and $C$ inside $B(s)$
from (\ref{scalar curvature}), (\ref{A for S3}),(\ref{B for
S3}),(\ref{C for S3}), (\ref{vol formula}), and (\ref{s for S3}). Namely, if $n \geq 2$, for any metric in $S_3$ there
exists a constant $c$ such that
\begin{equation}
\frac{1}{c \, rf} \leq \frac{1}{\operatorname{Vol}(B(s))}
\int_{B(s)} R \, {\omega}^{n} \leq \frac{c}{rf},
\end{equation} where $\operatorname{Vol}(B(s))=c_n (rf)^n$.

If $\omega$ is a nonflat K\"{a}hler metric in $S_1 \cup S_2$, we see from Theorem
\ref{F function} that $F$ must have $F'(x_0)>0$. Then the formula of
$A$, $B$, and $C$ in terms of $F$ is exactly the same as (\ref{ABC
another form}) derived in \cite{WZ} (See P536 in \cite{WZ}). Follow the proof of Proposition
2.6 in \cite{WZ}, we get the same conclusion. We summarize the above discussion
as the following result. Note that $f$ is defined in (\ref{def of f and h}).

\begin{proposition} \label{average scalar
curvature decay} When $n \geq 2 $, given any non flat metric in $
\overline{\mathcal{M}}_{n}$, there exists a constant $C>0$ such that
\begin{equation} \frac{1}{c(1+rf)} \leq \frac{1}{\operatorname{Vol}(B(s))}
\int_{B(s)} R \,  w^n \leq \frac{c}{1+rf}.
\end{equation} where $\operatorname{Vol}(B(s))=c_n (rf)^n$.
\end{proposition}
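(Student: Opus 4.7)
My plan is to split the argument along the partition $\overline{\mathcal{M}}_n\setminus\{g_e\}=S_1\cup S_2\cup S_3$ furnished by Theorem \ref{F function} and treat the two sides of the inequality by recycling the Wu--Zheng integration-by-parts on $S_1\cup S_2$ and by direct computation on $S_3$. Since the proposition's bound is stated with $1+rf$ (rather than $rf$), I will first note that on any compact set $\{rf\leq M\}$ both sides of the inequality are bounded between positive constants (the scalar curvature is smooth and $\operatorname{Vol}(B(s))=c_n(rf)^n$ is bounded away from zero for $s$ bounded away from $0$), so it suffices to prove the asymptotic estimate $\operatorname{Vol}(B(s))^{-1}\int_{B(s)}R\,\omega^n\asymp (rf)^{-1}$ for $rf$ large, and then absorb the bounded regime into the constant.

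For a nonflat $\omega\in S_1\cup S_2$, Theorem \ref{F function} tells us the associated $F$ is defined on $(-x_0,x_0)$ with $F''\geq 0$ and $F(x_0)=+\infty$, and in particular $F'$ is positive on $(0,x_0)$ away from a discrete set, so the relations
\[
A=\frac{F'F''}{2x(1+{F'}^2)^2},\qquad B=\frac{x^2}{v^2}-\frac{1}{v\sqrt{1+{F'}^2}},\qquad C=\frac{2}{v}-\frac{2x^2}{v^2}
\]
from (\ref{ABC another form}) remain valid verbatim, as do the expressions (\ref{distance and volume}) for $s$ and $v=rf$. Using $R=A+2(n-1)B+\tfrac12 n(n-1)C$ together with the $U(n)$-symmetry, I will integrate $R\,\omega^n$ in the $x$-variable exactly as Wu--Zheng do in their proof of Proposition 2.6, integrating by parts to convert the $A$- and $B$-contributions into boundary terms of order $v^{n-1}$ plus lower order, and comparing with $\operatorname{Vol}(B(s))=c_n v^n$; the only place where the argument of \cite{WZ} used strict positivity of the bisectional curvature is in discarding sign ambiguities, which is automatic here since $A,B,C\geq 0$ throughout.

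For $\omega\in S_3$, the explicit formulas (\ref{A for S3}), (\ref{B for S3}), (\ref{C for S3}) and (\ref{rf}) show that $R=O((rf)^{-1}\log(r/r_0))$ pointwise outside $\{r\leq r_0\}$, while inside $\{r\leq r_0\}$ the scalar curvature is simply a bounded smooth function. The volume form is $c_n\,d(rf)^n$, so the integral $\int_{B(s)}R\,\omega^n$ splits as a uniformly bounded contribution from $\{r\leq r_0\}$ plus $\int_{r_0}^{r}(\textrm{bounded multiple of }x_0^2/(rf))\,d(rf)^n$; an elementary estimate of this last integral yields the two-sided bound $\operatorname{Vol}(B(s))^{-1}\int_{B(s)}R\,\omega^n\asymp(rf)^{-1}$ once $rf$ is large, which combines with the a priori boundedness on the compact regime into the stated $(1+rf)^{-1}$ estimate.

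The main obstacle, as I see it, is purely bookkeeping on the $S_1\cup S_2$ side: transferring the Wu--Zheng integration-by-parts to the possibly degenerate case $F''\geq 0$ (instead of $F''>0$) requires checking that the boundary terms at the zeros of $F'F''$ still vanish and that the integrated $A$-contribution does not acquire a singular term; because each of $A$, $B$, $C$ is nonnegative, the relevant partial integrals are monotone in the upper limit, so taking care of this degeneracy reduces to a straightforward limiting argument.
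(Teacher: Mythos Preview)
Your proposal is correct and follows essentially the same route as the paper: split along $S_1\cup S_2$ versus $S_3$, invoke Wu--Zheng's proof of Proposition~2.6 verbatim on the former (the paper simply observes that $F'(x_0)>0$ for nonflat metrics in $S_1\cup S_2$, so the formulas (\ref{ABC another form}) hold unchanged and no separate degeneracy argument is needed), and compute directly from (\ref{A for S3})--(\ref{C for S3}) and (\ref{rf}) on the latter. One cosmetic slip: on $S_3$ the pointwise scalar curvature satisfies $R\asymp (rf)^{-1}$, not $O((rf)^{-1}\log(r/r_0))$ (recall $\log(r/r_0)\sim rf/x_0^2$, so your stated bound is too weak), but your integral is written with the correct integrand $\asymp 1/(rf)$ anyway, so the conclusion is unaffected.
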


Now we state the main theorem of this note.
\begin{theorem} \label{question 1.1 not
true}

Given any $n \geq 2 $, any nonflat K\"{a}hler metric in
$\overline{\mathcal{M}}_{n}$ has $\int_{B(s)} \sigma_{n} {\omega}^n$
unbounded when $s$ goes to infinity. Moreover, if $2 \leq k<n$ one
can construct a complete K\"{a}hler metric $\omega$ from $S_1
\subset \overline{\mathcal{M}}_{n}$ with bounded curvature on
$\mathbb{C}^n$ such that $\frac{1}{s^{2n-2k}} \int_{B(s)} \sigma_k
\, {\omega}^n$ is unbounded when $s$ tends to infinity.

\end{theorem}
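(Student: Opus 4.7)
The plan is to handle the two assertions separately, with both resting on the same algebraic identity. At a point $(z_{1},0,\dots,0)$ of a $U(n)$-invariant metric in $\overline{\mathcal{M}}_{n}$, the real Ricci tensor has one radial eigenvalue $\rho_{1}=A+(n-1)B$ and $n-1$ coinciding tangential eigenvalues $\rho_{2}=\cdots=\rho_{n}=B+\tfrac{n}{2}C$, each with real multiplicity two. Expanding the real $k$-th elementary symmetric polynomial of $\{\rho_{1},\rho_{1},\rho_{2},\rho_{2},\dots,\rho_{n},\rho_{n}\}$ and retaining single nonnegative summands yields, for $k\geq 2$,
\[
\sigma_{n}\;\geq\;\binom{2(n-1)}{n}\rho_{2}^{\,n},\qquad \sigma_{k}\;\geq\;\binom{2(n-1)}{k-2}A^{2}\rho_{2}^{\,k-2},
\]
where the second bound uses $\rho_{1}\geq A$ (since $B\geq 0$). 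From \eqref{ABC another form} one computes $\rho_{2}=[(n-1+\xi)v-(n-1)x^{2}]/v^{2}\geq\xi/v$, the inequality following from $v\geq x^{2}$, itself a consequence of $dv/dx=2x/(1-\xi)\geq 2x$ with $v(0)=x(0)=0$.

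For the first assertion, any nonflat $\omega\in\overline{\mathcal{M}}_{n}$ has $\xi\not\equiv 0$, so by Proposition \ref{xi function} there exists $r_{0}$ with $\xi(r)\geq\xi(r_{0})>0$ for all $r\geq r_{0}$. Differentiating $\operatorname{Vol}(B(s))=c_{n}v^{n}$ along $s$ gives the $U(n)$-invariant integration measure $\omega^{n}=nc_{n}v^{n-1}\,dv$, whence
\[
\int_{B(s)}\sigma_{n}\,\omega^{n}\;\geq\;nc_{n}\binom{2(n-1)}{n}\xi(r_{0})^{n}\int_{v(r_{0})}^{v(s)}\frac{dv}{v}\;=\;nc_{n}\binom{2(n-1)}{n}\xi(r_{0})^{n}\log\frac{v(s)}{v(r_{0})}\to\infty.
\]

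For the second assertion ($2\leq k<n$) I would prescribe $\xi$ directly. Since $A=\xi'/h$ equals $d\xi/dv$, it is convenient to specify $A$ as a function of $v$: place smoothed bumps on which $A\equiv C_{0}$, of width $w_{j}=w\cdot 2^{-\alpha j}$ in the $v$-variable, centered at $v_{j}=2^{j}$, for some fixed $\alpha\in(0,1)$ and $w>0$ small. Then $\xi(\infty)=C_{0}\sum_{j}w_{j}<1$, and Proposition \ref{xi function} combined with Theorem \ref{F function} places the resulting metric in $S_{1}$. Bounded curvature is automatic: $A\leq C_{0}$ by construction, while the closed forms $B=x^{2}/v^{2}-(1-\xi)/v$ and $C=2(v-x^{2})/v^{2}$ are smooth everywhere and $O(1/v)$ at infinity. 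Combining the pointwise lower bound for $\sigma_{k}$ with $\rho_{2}\geq\xi(r_{0})/v$ on $\{r\geq r_{0}\}$ and $\omega^{n}=nc_{n}v^{n-1}\,dv$ yields
\[
\int_{B(s)}\sigma_{k}\,\omega^{n}\;\gtrsim\;C_{0}^{2}\,\xi(r_{0})^{k-2}\sum_{j:\,v_{j}\leq v(s)}v_{j}^{\,n-k+1}w_{j}\;\asymp\;v(s)^{\,n-k+1-\alpha},
\]
where the hypothesis $k<n$ is used to secure $n-k+1-\alpha>0$, so the geometric sum is dominated by its largest term. Since $\omega\in S_{1}$ implies $v(s)\asymp s^{2}$, the quotient $s^{-(2n-2k)}\int_{B(s)}\sigma_{k}\,\omega^{n}$ is bounded below by a constant multiple of $v(s)^{1-\alpha}$, which diverges.

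The main obstacle lies in reconciling three competing constraints on the staircase $\xi$: bounded curvature forces each bump height to be at most a fixed constant; membership in $S_{1}$ requires $\sum w_{j}$ to be summable so that $\xi(\infty)<1$; and the size estimate above requires $\sum v_{j}^{\,n-k+1}w_{j}$ to outgrow $v(s)^{\,n-k}$. The geometric placement $v_{j}=2^{j}$ with $w_{j}=w\cdot 2^{-\alpha j}$, $\alpha\in(0,1)$, threads all three conditions; a subsidiary technical point is to check that $B$ and $C$ do not develop spikes across the smoothed jumps of $\xi$, which follows directly from their closed-form expressions in terms of $\xi$, $x$, and $v$.
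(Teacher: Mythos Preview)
Your argument is correct and, while it follows the same overall strategy as the paper (isolate a single nonnegative summand of $\sigma_{k}$ containing a high power of the radial curvature, then build a metric whose radial curvature is spiky enough to blow up the integral), the implementation is different in a useful way. For $k=n$ the paper bounds $\sigma_{n}$ from below by the $C^{n}$ term and proves $C\gtrsim 1/v$ through the $F$-parametrization, which forces a separate computation for $S_{3}$; your inequality $\rho_{2}\geq\xi/v$, derived directly from $B=x^{2}/v^{2}-(1-\xi)/v$, $C=2(v-x^{2})/v^{2}$ and $v\geq x^{2}$, handles $S_{1}\cup S_{2}\cup S_{3}$ in one stroke and never touches $F$. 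For $2\leq k<n$ the paper works in the $F$-picture, prescribing $F''$ as bumps of height $l$ on intervals $[l,l+l^{-5/2}]$ and then estimating $\int (F'')^{2}\tau^{2(n-k)+1}\,d\tau$; you instead prescribe $A=d\xi/dv$ as constant-height bumps in the $v$-variable at geometrically spaced centers $v_{j}=2^{j}$, which makes both the $S_{1}$-membership ($\xi(\infty)=C_{0}\sum w_{j}<1$) and the curvature bound ($A\leq C_{0}$, $B,C=O(1/v)$) immediate, and reduces the divergence to the elementary fact that $\sum_{2^{j}\leq v}2^{j(n-k+1-\alpha)}\asymp v^{n-k+1-\alpha}$. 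The paper's route has the advantage that the $F$-picture is already set up for other purposes (Propositions~2.4--2.6), while yours is more self-contained and makes the bounded-curvature verification cleaner.
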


\begin{proof}[Proof of Theorem \ref{question 1.1 not
true}]

It follows from (\ref{ABC one form}) that for any metric in
$\overline{\mathcal{M}}_{n}$ we have Ricci curvature at $z$ given
by:
\begin{equation}
\lambda=R_{1\overline{1}}=A+(n-1)B,\
\mu=R_{i\overline{i}}=B+\frac{n}{2}C \ \ \     2 \leq i \leq n.
\end{equation} Note that we are now working on the K\"{a}hler manifolds
$\mathbb{C}^n$ and the Ricci tensor is $J$-invariant where $J$ is
the standard complex structure on $\mathbb{C}^n$. Therefore the
Ricci tensor in the real case has eigenvalue $\lambda$ of
multiplicity 2 and $\mu$ of multiplicity $2n-2$. From now on, let
$\sigma_{k}$ denote the $k$-th elementary symmetric function of the
Ricci curvature tensor.

First note that Question 1.1 are true for any metric $\omega \in
\overline{\mathcal{M}}_{n}$ when $k=1$. Since $\sigma_1=2 R$ where
$R$ is the scalar curvature in the K\"{a}hler case, it follows from
Proposition \ref{average scalar curvature decay} and the upper bound
of the volume growth of $B(s)$ after Proposition \ref{xi function}
that $\frac{1}{s^{2n-2}} \int_{B(s)} R\, {\omega}^n$ is bounded when
$r$ tending to infinity. Therefore, we focus on Question 1.1 in the
case of $2 \leq k  \leq n$, If $2 \leq k \leq n$, $\sigma_{k}$ of
the Ricci tensor is a linear combination of ${\lambda}^{2}
{\mu}^{k-2}$, $\lambda {\mu}^{k-1}$, and ${\mu}^{k}$. To sum up,
$\sigma_{k}$ is a linear combination of three types of quantities:

\vskip 1mm

(Type I) $A^{2} B^{i} C^{j}$, $A B^{1+i} C^{j}$, and $ B^{2+i}
C^{j}$ when $i \geq 0$, $j \geq 0$, and $i+j=k-2$.

\vskip 1mm

(Type II) $A B^{i} C^{j}$ and $B^{1+i} C^{j}$ when $i \geq 0$, $j
\geq 0$, and $i+j=k-1$.

\vskip 1mm

(Type III) $B^{i} C^{j}$ when $i \geq 0$, $j \geq 0$, and $i+j=k$.

\vskip 1mm

We divide the proof of Theorem \ref{question 1.1 not
true} into two cases.

(\textbf{Case I}) If $k=n$, we only need to look at the term $C^n$
contained in $\sigma_{n}$. Recall that if for any non flat
K\"{a}hler metric $\omega$ in $S_1 \cup S_2$, we may assume that
there exists $0<M_1<x_0$ such that $F'(x) \geq C_0$ where
$C_0=F'(M_1)>0$ when $x \geq M_1$. we have the expression of $C$
from (\ref {ABC another form}):

\begin{eqnarray}
C &=& \frac{2v-2x^2}{v^2}  \\
& = & \frac{\int_{0}^{x} 2 \tau (\sqrt{1+{F'(\tau)}^2}-1) d\tau}{v^2}  \nonumber\\
& \geq & \frac{\int_{M_1}^{x} 2 \tau
(\frac{F'(\tau)^2}{\sqrt{1+{F'(\tau)}^2}+1}) d\tau} {v
(\int_{M_1}^{x} 2 \tau \sqrt{1+{F'(\tau)}^2} d\tau +\int_{0}^{M_1} 2
\tau \sqrt{1+{F'(\tau)}^2} d\tau )}.   \nonumber
\end{eqnarray}
Note that we have $1 \leq \frac{F'(x)}{C_0}$ when $x \geq M_1$.
\begin{eqnarray}
C \geq
\frac{\frac{C_0}{1+\sqrt{C_0^2+1}}I(x)}{v(\sqrt{\frac{1}{C_0^2}+1}I(x)+M_1^2
\sqrt{1+C_0^2})} \geq \frac{C_1}{v},
\end{eqnarray} where \begin{equation}
C_1=\frac{\frac{C_0}{1+\sqrt{C_0^2+1}}I(x)}{\sqrt{\frac{1}{C_0^2}+1}I(x)+M_1^2
\sqrt{1+C_0^2}}, \ \ \ \ \ \ \ \ \ I(x)=\int_{M_1}^{x} 2\tau
F'(\tau) d\tau.
\end{equation} Since $I(x)$ goes to $\infty$ and $C_1$ is bounded when $x$ tends to
$x_0$, we conclude that there exists a $C_2$ and $M_2$ such that
when $x > M_2$,
\begin{equation}
C \geq \frac{C_2}{v}.   \label {lower bound C}
\end{equation} We remark that (\ref {lower bound C}) is used in the proof
of Proposition 2.3 in \cite{WZ}.

There exists a constant $C_3$ only depending on $n$ such that:
\begin{eqnarray}
\int_{B(s)}    \sigma_{n}  {\omega}^n &\geq& C_3  \int_{B(s)} C^n
{\omega}^n   \label {unbounded} \\
&=& C_3  c_n \int_{0}^{x}  C^n  d v^n   \nonumber\\
&\geq &  C_3  c_n \int_{v(M_2)}^{v(x)}  (\frac{C_2}{v})^n  d v^n   \nonumber\\
&=& n C_3 c_n C_2^{n} \ln{\frac{v(x)}{v(M_2)}}   \nonumber
\end{eqnarray} Of course (\ref{unbounded}) is unbounded when $x$ tends to $x_0$ since $v(x_0)=+\infty$.

To sum up, we show that for any non flat K\"{a}hler metric $\omega$
in $S_1 \cup S_2$, $\int_{\mathbb{C}^n}  \sigma_{n}  {\omega}^n$ is
$\infty$. If $\omega \in S_3$, it follows from (\ref{rf}) and
(\ref{C for S3}) that $\lim_{s \rightarrow +\infty } \int_{B(s)}
\sigma_{n} {\omega}^n$ is unbounded when $s$ goes to infinity.
Therefore, Question 1.1 is false when $k=n$ for any non flat
K\"{a}hler metric $\omega$ in $\overline{\mathcal{M}}_{n}$.

\vskip 1mm

(\textbf{Case II}) If $2 \leq k < n$, For any fixed nonflat
K\"{a}hler metric $\omega$ in $S_1$. we may assume that there exist
$C_4$ and $M_3$ such that $F'(x) \leq C_4$ for all $x \in
(-x_0,x_0)$ and $F'(x) \geq \frac{1}{C_4}$ when $x \geq M_3$. Then
it follows from a similar argument as in (\ref {lower bound C}), we
may further assume that there exist $C_5$ and $M_3$ such that for
any $x \geq M_3$ \begin{equation} C \geq \frac{C_5}{v}.
\end{equation}
 Since
$A=\frac{F' F''}{2x(1+{F'}^2)^2}$, we conclude that $A$ and $
\frac{F''(x)}{x}$ are equivalent. If we can construct an K\"{a}hler
metric $\omega$ in $S_1$ such that
\begin{equation}
\frac{1}{s^{2n-2k}} \int_{B(s)} (\frac{F''(x)}{x})^{2} (\frac{C_5}{v})^{k-2} {\omega}^n
\label{integral for AC}
\end{equation}
is unbounded when $s$ tends to $\infty$, then so is
$\frac{1}{s^{2n-2k}} \int_{B(s)} A^{2} C^{k-2} {\omega}^n$. Note
that the term $A^{2} C^{k-2}$ is contained in $\sigma_k$, it follows
that $\frac{1}{s^{2n-2k}} \int_{B(s)} \sigma_k {\omega}^n$ will be
unbounded when $s$ tends to $\infty$.

Let us rewrite (\ref{integral for AC}):
\begin{eqnarray}
& & \int_{B(s)} (\frac{F''(x)}{x})^{2} (\frac{C_5}{v})^{k-2} {\omega}^n \label{an integral} \\
&=& c_n C_5^{k-2}   \int_{0}^{x}   (\frac{F''(\tau)}{\tau})^{2} (\frac{1}{v})^{k-2} d v^{n} \nonumber\\
&=&  2nc_n C_5^{k-2}  \int_{0}^{x}  \frac{1}{\tau} (F''(\tau))^2
v^{n-k+1} \sqrt{1+(F'(\tau))^2} d\tau. \nonumber
\end{eqnarray}

Since $s=\int_{0}^{x}  \sqrt{1+(F'(\tau))^2} d\tau$, $v=\int_{0}^{x}
2\tau \sqrt{1+(F'(\tau))^2} d\tau$ and $F'(x) \leq C_4$ we know $s$
and $x$ are equivalent, $v$ and $x^2$ are equivalent. In order to
estimate (\ref{an integral}), it suffices to estimate the following.
\begin{equation}
\int_{0}^{x}  (F'')^2 {\tau}^{2(n-k)+1} \, d\tau.
\end{equation}

To sum up, if there exists a function $\delta(x) \in C^{\infty}
[0,+\infty)$, such that
\begin{equation}
\lim_{x \rightarrow x_0} \frac{1}{x^{2n-2k}} \int_{0}^{x} {\delta}^2
(\tau) \, {\tau}^{2(n-k)+1} \, d\tau=+\infty,\ \ \ \ \
\int_{0}^{+\infty} \delta (x) \, dx<+\infty. \label{looking for}
\end{equation} Then we can solve $F(x)$ with $F''(x)=\delta (x)$ with the initial
value $F(0)=F'(0)=0$, it will follow from Theorem \ref{F function} that
we can construct a complete K\"{a}hler metric $\omega$ in $S_1$ such
that
\begin{equation} \frac{1}{s^{2n-2k}} \int_{B(s)} \sigma_k \,
{\omega}^n
\end{equation} is unbounded when $s$ tends to infinity. Hence both
Question 1.1 and 1.2 can not be true when $2 \leq k<n$.

In fact such a $\delta(x)$ is not hard to construct. Consider
$\bar{\delta}(x)$ defined by the following with $q$ an integer to be determined.
\begin {equation}
\bar{\delta}=
\begin{cases}
2    &     x \in [2,2+(\frac{1}{2})^q]   \\
\vdots      &    \vdots    \\
l    &     x \in [l,l+(\frac{1}{l})^q]   \\
\vdots      &    \vdots    \\
0    &     x \in [0,+\infty) \setminus (\cup_{l \geq 2}
[l,l+(\frac{1}{l})^q] )
\end{cases}
\end{equation}

Now set $q=\frac{5}{2}$, it is easy to verify that $\psi(x)$
satisfies (\ref{looking for}). Choose $\delta(x)$ as a suitable
smoothing of $\bar{\delta}(x)$ on $[0,+\infty)$ which also satisfies
(\ref{looking for}), we will get the desired counterexample. It can
be checked that the result metric $\omega \in S_1$ has bounded
curvature on $\mathbb{C}^n$.

\end{proof}

It follows from Theorem \ref{F function} and Proposition
\ref{average scalar curvature decay} that for any K\"{a}hler
metric $\omega \in S_1$, $(\mathbb{C}^n, \omega)$ has quadratic
average scalar curvature decay. Note that the same result for any
complete K\"{a}hler manifolds with bounded nonnegative bisectional
curvature and Euclidean volume growth has been proved by Ni (See
\cite{Ni2} and \cite{Ni3}). Now we construct the following example
which implies that in general only assuming Euclidean volume
growth one can not expect the same rate of decay for
$L^{p}$ norm of curvature for any $p>1$.

\begin{proposition} \label{lp example}  For any $n
\geq 2$ and any $p>1$, there exists a metric $\omega \in
\overline{\mathcal{M}}_{n}$ such that the geodesic balls in
$(\mathbb{C}^n, \omega)$ has Euclidean volume growth. Moreover,
\begin{equation} \frac{s^2}{\operatorname{Vol}(B(s))} \int_{B(s)}
(Rm(\frac{\partial}{\partial s}, J\frac{\partial}{\partial
s},\frac{\partial}{\partial s} J \frac{\partial}{\partial s}))^p
{\omega}^n
\end{equation} is unbounded as $s$ goes to infinity. Here we denote $\frac{\partial}{\partial s}$ to the
unit radial direction on $\mathbb{C}^n$.
\end{proposition}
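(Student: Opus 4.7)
The plan is to apply Theorem \ref{F function} to construct a metric in $S_1$ from a carefully chosen smooth $F''$, in the spirit of the spike construction in the proof of Theorem \ref{question 1.1 not true}. First I would translate the integral in the statement into the $F$-picture. At a point $(z_1,0,\ldots,0)$ with $z_1 > 0$, the unit radial vector $\partial/\partial s$ and $J\partial/\partial s$ span the same real two-plane as the orthonormal frame vectors $e_1, Je_1$, so up to a universal positive factor
\[
Rm\!\left(\tfrac{\partial}{\partial s}, J\tfrac{\partial}{\partial s}, \tfrac{\partial}{\partial s}, J\tfrac{\partial}{\partial s}\right) \,\asymp\, A \,=\, \frac{F'(x)F''(x)}{2x(1+F'(x)^2)^2}
\]
by (\ref{ABC another form}). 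For any nonflat $\omega \in S_1$, $F'$ is globally bounded above and bounded below by a positive constant once $x \geq M$ (for some $M$ past the first positive mass of $F''$), so $A \asymp F''(x)/x$ for $x \geq M$. The $U(n)$-symmetry together with (\ref{distance and volume}) and the change of variables $dv = 2x\sqrt{1+F'(x)^2}\,dx$ then gives
\[
\int_{B(s)} A^p \, \omega^n \,\asymp\, \int_0^{x(s)} (F''(\tau))^p\, \tau^{2n-1-p}\, d\tau,
\qquad
\frac{s^2}{\operatorname{Vol}(B(s))} \,\asymp\, \frac{1}{x^{2n-2}},
\]
using $s \asymp x$ and $v \asymp x^2$ when $F'$ is bounded.

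The task reduces to producing $\delta \in C^\infty([0,+\infty))$ with $\delta \geq 0$ and
\[
\int_0^{+\infty} \delta(\tau)\,d\tau < +\infty
\quad\text{and}\quad
\lim_{x \to +\infty} \frac{1}{x^{2n-2}} \int_0^{x} \delta(\tau)^p\, \tau^{2n-1-p}\, d\tau = +\infty;
\]
then setting $F''=\delta$, $F(0)=F'(0)=0$ and extending $F$ evenly produces, via Theorem \ref{F function}, a metric $\omega \in S_1$ with Euclidean volume growth and the desired unbounded behaviour.

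I would construct $\delta$ by mollifying a sum of indicator bumps of height $l^{\alpha}$ and width $l^{-q}$ near each integer $l \geq 2$. A direct count gives
\[
\int_0^{+\infty} \delta \,\asymp\, \sum_{l \geq 2} l^{\alpha-q}, \qquad
\int_0^{x} \delta^p\, \tau^{2n-1-p}\, d\tau \,\asymp\, \sum_{l \leq x} l^{\alpha p - q + 2n - 1 - p},
\]
so the normalized integral behaves like $x^{(\alpha-1)p - q + 2}$. Both requirements hold precisely when $\alpha + 1 < q < (\alpha-1)p + 2$, an interval that is non-empty whenever $\alpha > 1$ because $p > 1$. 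Taking $\alpha = 2$ gives $q \in (3, p+2)$, and any such $q$ works; a standard mollification produces $\delta \in C^\infty$ while preserving both estimates.

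The main obstacle I expect is the careful bookkeeping in the first step: making sure $A \asymp F''(x)/x$ holds for all $x \geq M$ with constants independent of the spikes, and that the contribution from $\tau \in [0,M]$ (where the equivalence may fail) is of lower order on both sides. Both follow because $F'(\infty)$ is finite and bounded by the convergent $L^1$-mass of the bumps, while $F'(x) \geq F'(M) > 0$ for $x \geq M$ once the first bump has been passed. Once these estimates are in place, the characterization of $S_1$ in Theorem \ref{F function} delivers the required Euclidean volume growth and the comparison above yields unboundedness.
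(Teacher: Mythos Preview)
Your proposal is correct and follows essentially the same route as the paper: reduce to finding a nonnegative $\delta\in C^\infty[0,\infty)$ with $\int_0^\infty\delta<\infty$ and $x^{2-2n}\int_0^x\delta^p\tau^{2n-1-p}\,d\tau\to\infty$, then build $\delta$ as a mollified sum of bumps of height $l^\alpha$ and width $l^{-q}$ with $\alpha+1<q<(\alpha-1)p+2$. Your write-up in fact spells out the reduction (the equivalences $A\asymp F''/x$, $s\asymp x$, $v\asymp x^2$ for metrics in $S_1$) more explicitly than the paper, which simply refers back to Case~II of Theorem~\ref{question 1.1 not true}.
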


\begin{proof}[Proof of Proposition \ref{lp example}]
For a given metric $\omega$ in $S_1$, follow a similar argument in (Case II) of the proof of
Theorem \ref{question 1.1 not true}, it suffices to show that we can find a smooth
function $\eta(x)$ on $[0,+\infty)$ such that
\begin{equation}
\lim_{x \rightarrow +\infty} \frac{1}{x^{2n-2}} \int_{0}^{x}
{\eta}^p (\tau) \, {\tau}^{2n-1-p} \, d\tau=+\infty,\ \ \ \ \
\int_{0}^{+\infty} \eta (x) \, dx<+\infty. \label{delta wanted}
\end{equation}

Consider $\bar{\eta}(x)$ defined by the following where $\alpha$ and $\beta$ are two
integers to be determined.
\begin {equation}
\bar{\eta}=
\begin{cases}
2^{\alpha}    &     x \in [2,2+(\frac{1}{2})^{\beta}]   \\
\vdots      &    \vdots    \\
l^{\alpha}    &     x \in [l,l+(\frac{1}{l})^{\beta}]   \\
\vdots      &    \vdots    \\
0    &     x \in [0,+\infty) \setminus (\cup_{l \geq 2}
[l,l+(\frac{1}{l})^q] )
\end{cases}
\end{equation}

Pick any $\alpha>1$ and $1+\alpha < \beta < p(\alpha-1)+2$, then
$\bar{\eta}$ defined above satisfies (\ref{delta wanted}). It is not hard to find
$\eta(x)$ from a suitable smoothing of $\bar{\eta}(x)$ which will result in
the desired metric $\omega$. Note that $(\mathbb{C}^n,\omega)$ we constructed has unbounded curvature
on $\mathbb{C}^n$.   \end{proof}

We proceed to show that Question \ref{Kahler category} is true for
$\overline{\mathcal{M}}_{n}$. It seems that Question \ref{Kahler
category} should be a more suitable conjecture at least for complete
K\"{a}hler manifolds with nonnegative bisectional curvature.

\begin{theorem}  \label{question 1.2 true}  For any
metric $\omega \in \overline{\mathcal{M}}_{n}$, then $s^{-2n+2k} \int_{B(s)} Ric^{k} \wedge
\omega^{n-k}$ is bounded when $s$ goes to infinity.
\end{theorem}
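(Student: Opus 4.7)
The plan is to reduce $Ric^k\wedge\omega^{n-k}$ to a scalar multiple of the volume form using the $U(n)$-invariant Ricci eigenstructure, and then estimate the resulting radial integral monomial by monomial. At a point $z=(z_1,0,\ldots,0)$ the Ricci tensor has complex eigenvalues $\lambda=A+(n-1)B$ (multiplicity $1$) and $\mu=B+\tfrac{n}{2}C$ (multiplicity $n-1$), and the standard identity $\alpha^k\wedge\omega^{n-k}/\omega^n=\binom{n}{k}^{-1}\sigma_k(\text{eigenvalues})$ for Hermitian $(1,1)$-forms gives
\begin{equation}
\frac{Ric^{k}\wedge\omega^{n-k}}{\omega^n}=\frac{k!(n-k)!}{n!}\Bigl[\binom{n-1}{k-1}\lambda\mu^{k-1}+\binom{n-1}{k}\mu^k\Bigr].
\end{equation}
The crucial feature is that $\lambda$, and hence $A$, appears only to the first power: this is the K\"ahler analogue of the fact that in Theorem~\ref{question 1.1 not true} the Ricci eigenvalue $\lambda$ carries real multiplicity $2$ and produces the exploitable $A^{2}C^{k-2}$ monomial. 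Substituting $\lambda=A+(n-1)B$ and $\mu=B+\tfrac{n}{2}C$, the integrand becomes a finite linear combination of monomials of just two shapes: $AB^{b}C^{c}$ with $b+c=k-1$ and $B^{b}C^{c}$ with $b+c=k$.

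For $1\leq k<n$ I would use the pointwise estimates $0\leq B\leq C_{n}/v$ and $0\leq C\leq C_{n}/v$, which follow from (\ref{ABC another form}) together with $v=\int_{0}^{x}2\tau\sqrt{1+F'(\tau)^{2}}\,d\tau\geq x^{2}$ for metrics in $S_{1}\cup S_{2}$, and directly from (\ref{B for S3})--(\ref{C for S3}) for metrics in $S_{3}$. Writing the volume element radially as $d\operatorname{Vol}=nc_{n}v^{n-1}h\,dr$, each pure $B^{b}C^{c}$ term with $b+c=k<n$ satisfies
\begin{equation}
\int_{B(s)}B^{b}C^{c}\,\omega^{n}\leq C'_{n,k}\int_{0}^{v(s)}v^{n-k-1}\,dv=\tfrac{C'_{n,k}}{n-k}\,v(s)^{n-k}\leq\tfrac{C'_{n,k}}{n-k}\,s^{2(n-k)},
\end{equation}
where the last step invokes the $\overline{\mathcal{M}}_{n}$-version of the Wu--Zheng bound $v\leq s^{2}$. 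The $AB^{b}C^{c}$ terms are handled by the identity $A=\xi'/h$, which reduces the radial integral to a constant multiple of $\int_{0}^{r(s)}\xi'(r)\,v(r)^{n-k}\,dr$; integration by parts, together with $\xi(0)=0$, $0\leq\xi\leq 1$ and $v'=h\geq 0$, controls this by $\xi(r(s))\,v(r(s))^{n-k}\leq s^{2(n-k)}$. Dividing by $s^{2n-2k}$ yields the required boundedness throughout $1\leq k<n$.

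The endpoint $k=n$ is not covered by this monomial-by-monomial approach: the pure $B^{b}C^{c}$ piece of $(n-1)B\mu^{n-1}$ with $b+c=n$ produces $\int v^{-n}\omega^{n}$, which diverges logarithmically, and the required cancellation is lost on individual monomials. For this case I would instead appeal directly to the extension of Proposition~2.4 to $\overline{\mathcal{M}}_{n}$ recorded just after Proposition~\ref{xi function}: nonnegative bisectional curvature forces $\lambda,\mu\geq 0$, so $Ric^{n}$ is a nonnegative $(n,n)$-form and
\begin{equation}
s^{-2n+2n}\int_{B(s)}Ric^{n}\;=\;\int_{B(s)}Ric^{n}\;\leq\;\int_{\mathbb{C}^{n}}(Ric)^{n}\;\leq\;c_{n}\Bigl(\frac{n}{\pi}\Bigr)^{n}.
\end{equation}
The main obstacle is conceptual rather than computational: one must recognize that the linear dependence on $\lambda$ in $\sigma_{k}^{\mathbb{C}}(\lambda,\mu,\ldots,\mu)$ is exactly what distinguishes the K\"ahler and Riemannian versions of Yau's question, and that the only degeneration where the naive estimate fails is the top degree $k=n$, which is rescued by the Chern-number bound of Wu and Zheng.
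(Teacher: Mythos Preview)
Your proof is correct and follows essentially the same strategy as the paper: identify that $Ric^{k}\wedge\omega^{n-k}$ involves $\lambda$ only to the first power, reduce to the two monomial types $AB^{b}C^{c}$ and $B^{b}C^{c}$, bound $B,C$ pointwise by $O(1/v)$, handle the $A$-term by an integration by parts, and treat $k=n$ via the Wu--Zheng Chern-number bound.

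The one noteworthy difference is in the treatment of the $AB^{b}C^{c}$ monomial. The paper works in the $F$-picture, substitutes $A=\dfrac{F'F''}{2x(1+F'^{2})^{2}}$, and integrates by parts in the $x$-variable; this forces a separate verification for metrics in $S_{3}$, where the $F$-representation is only available on a finite interval. Your choice to use $A=\xi'/h$ and integrate by parts in $r$ is a cleaner move: since $\xi$ is globally defined, nondecreasing, and bounded by $1$ for every metric in $\overline{\mathcal{M}}_{n}$, the bound $\int_{0}^{r(s)}\xi'\,v^{n-k}\,dr\leq \xi(r(s))\,v(r(s))^{n-k}\leq s^{2(n-k)}$ covers $S_{1}$, $S_{2}$ and $S_{3}$ simultaneously, and also absorbs the case $k=1$ without invoking Proposition~\ref{average scalar curvature decay}. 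This is a genuine simplification, though not a different approach.
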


\begin{proof}[Proof of Proposition \ref{question 1.2 true}] \, First we remark that
it directly follows from analogues of Proposition 2.3, 2.4 and 2.6
in \cite{WZ} for the space $\overline{\mathcal{M}}_{n}$ (See
Proposition \ref{average scalar curvature decay} and the paragraph
after Proposition \ref{xi function}) that Question 1.3 is true for
$k=1$ and $k=n$. If suffices to show that $s^{-2n+2k} \int_{B(s)}
Ric^{k} \wedge \omega^{n-k}$ is bounded for any $2 \leq k <n$.

Note that for $2 \leq k <n$, $Ric^{k} \wedge \omega^{n-k}$ is a
linear combination of $\lambda {\mu}^{k-1}$ and ${\mu}^{k}$. It
turns out that we only needs to show that $\frac{1}{s^{2n-2k}}
\int_{B(s)} P(A,B,C) {\omega}^n$ is bounded when $s$ goes to
infinity where $P$ is a monomial of the following two types:

\vskip 1mm

(Type I) $A B^{i} C^{j}$, and $ B^{1+i} C^{j}$ when $i \geq 0$, $j
\geq 0$, and $i+j=k-1$.

\vskip 1mm

(Type II) $B^{p} C^{q}$ when $p \geq 0$, $q \geq 0$, and $p+q=k$.

\vskip 1mm

First we consider any K\"{a}hler metric $\omega$ in $S_1 \cup S_2$.
Note that (\ref{ABC another form}) implies that
\begin{equation}
B \leq \frac{x^2}{v^2} \leq \frac{1}{v},\ \ \ C\leq \frac{2}{v}.
\end{equation}
Then we have the following estimate:
\begin{eqnarray}
& & \frac{1}{s^{2n-2k}} \int_{B(s)} B^{p} C^{q} {\omega}^n \label{hoho1} \\
&\leq & 2^{q} c_n \frac{1}{s^{2n-2k}}  \int_{0}^{v(x)} \frac{1}{v^k}
n v^{n-1} dv \nonumber \\
& \leq & \frac{2^q n c_n  (\int_{0}^{x} 2\tau \sqrt{1+(F'(\tau))^2}
d\tau)^{n-k}}{ (n-k)(\int_{0}^{x}  \sqrt{1+(F'(\tau))^2} d\tau
)^{2n-2k}} \nonumber
\end{eqnarray}
According to the L'Hospital's rule, (\ref{hoho1}) has the limit when
$x$ tends to $x_0$: \begin{equation} \lim_{x \rightarrow
x_0}\frac{(\int_{0}^{x} 2\tau \sqrt{1+(F'(\tau))^2}
d\tau)^{n-k}}{(\int_{0}^{x} \sqrt{1+(F'(\tau))^2} d\tau
)^{2n-2k}}=(\frac{2}{\sqrt{1+\lim_{x \rightarrow x_0}
F'(x)}})^{n-k}.
\end{equation}  We conclude that $\frac{1}{s^{2n-2k}} \int_{B(s)} B^{p} C^{q}
{\omega}^n$ is bounded when $s$ goes to infinity.

Next we turn to the term $A B^{i} C^{j}$, integrate by parts as in
the original proof of Proposition 2.6 in \cite{WZ}.

\begin{eqnarray}
& & \int_{B(s)} A B^{i} C^{j} {\omega}^n \label{hoho2} \\
&=& c_n  \int_{0}^{x} \frac{F'F''}{2\tau (1+(F')^2)^2}
\frac{1}{v^{k-1}}
n v^{n-1} 2\tau \sqrt{1+(F')^2} d\tau  \nonumber  \\
&=& c_n \int_{0}^{x}  \frac{F'F''}{(1+(F')^2)^{\frac{3}{2}}} n v^{n-k} d\tau \nonumber \\
&=&  c_n \int_{0}^{v}  n v^{n-k} d(-\frac{1}{\sqrt{1+(F')^2}}) \nonumber  \\
&=& (-c_n n v^{n-k}\frac{1}{\sqrt{1+(F')^2}})|_{0}^{v}+ c_n
n(n-k)\int_{0}^{v}
\frac{1}{\sqrt{1+(F')^2}} v^{n-k-1} dv  \nonumber \\
&\leq & c_n n v^{n-k}.    \nonumber
\end{eqnarray}
Using the L'Hospital's rule again, we conclude that
\begin{equation}
\frac{1}{s^{2n-2k}} \int_{B(s)} A B^{i} C^{j} {\omega}^n
\end{equation} is bounded when $s$ tends to infinity.

It remains to verify that Question \ref{Kahler category} is true
when $2 \leq k<n$ for any metric $\omega \in S_3$. Note that in this case
we have (\ref{B for S3}), (\ref{C for S3}),  (\ref{s for S3}), and $A=0$ outside a compact set
for metrics in $S_3$, it follows from a
straightforward calculation that $\frac{1}{s^{2n-2k}} \int_{B(s)}
Ric^{k} \wedge \omega^{n-k}$ is bounded when $s$ goes to infinity.
Hence we finish the proof of Proposition \ref{question 1.2 true}.
\end{proof}

\vskip 1mm

We also have the following result relating the growth of the
coordinate function $z_i$ to the volume growth of the geodesic
balls with respect to the metric $\omega$ in
$\overline{\mathcal{M}}_{n}$.

\begin{proposition}  
\label{holomorphic function}

Given any metric $\omega \in \overline{\mathcal{M}}_{n}$, if some
coordinate function $z_i$ for some $1 \leq i \leq n$ has polynomial
growth with respect to $\omega$, then the geodesic balls of
$(\mathbb{C}^n, \omega)$ have Euclidean volume growth.

\end{proposition}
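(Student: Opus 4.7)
I would prove the contrapositive: if $(\mathbb{C}^n,\omega)$ does not have Euclidean volume growth, then no coordinate function $z_i$ grows polynomially in the $\omega$--distance. By the $U(n)$ invariance of $\omega$, unitary changes of coordinates preserve both the metric and the polynomial growth condition, so one may assume $i=1$. Restricting to points on the positive $z_1$--axis, where $|z_1|=\sqrt{r}$ and the geodesic distance from the origin equals $s(r)=\int_{0}^{r}\sqrt{h/(4t)}\,dt$, the polynomial growth hypothesis becomes $\sqrt{r}\leq C(1+s(r))^d$ for some $C,d>0$; equivalently, the inverse function $r(s)$ is bounded above by a polynomial in $s$.

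The heart of the argument is an elementary ODE computation. Using $x=\sqrt{rh}$ together with (\ref{dx wrt dr}) one finds
\[
\frac{ds}{dr}=\frac{x(r)}{2r},\qquad \frac{dx}{ds}=1-\xi(s),\qquad \frac{d(\log r)}{ds}=\frac{2}{x(s)},
\]
and in particular $x(s)=\int_{0}^{s}(1-\xi(\sigma))\,d\sigma$ is nondecreasing with limit $x_0\in(0,+\infty]$ as $s\to\infty$.

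Now suppose $\omega\notin S_1$. By Theorem \ref{F function} and Proposition \ref{xi function}, either $\omega\in S_3$, where $\xi\equiv 1$ on $[r_0,\infty)$ and $x$ is eventually constant equal to $x_0=\sqrt{r_0 h(r_0)}<\infty$, or $\omega\in S_2$, where $\xi(\infty)=1$ and therefore $dx/ds\to 0$, which by L'H\^opital gives $x(s)/s\to 0$. In the first situation (and in the $S_2$ subcase $x_0<\infty$), one has $d(\log r)/ds\geq 2/x_0$ for $s$ large, so $r(s)$ grows exponentially in $s$. In the remaining $S_2$ subcase $x_0=+\infty$, for every $k>0$ one has $x(s)<s/k$ for $s$ large, hence $d(\log r)/ds>2k/s$ and $r(s)\geq c_k\, s^{2k}$; since $k$ is arbitrary, $r(s)$ grows faster than any polynomial. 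Either way $\sqrt{r(s)}$ is super-polynomial in $s$, contradicting the polynomial growth of $z_1$.

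Hence $\omega\in S_1$, and by the description of $S_1$ in Theorem \ref{F function} the geodesic balls of $(\mathbb{C}^n,\omega)$ have Euclidean volume growth. The whole argument is a short ODE comparison fed into the classification of Theorem \ref{F function}; the only delicate point I anticipate is the $S_2$ subcase with $x_0=+\infty$, where one has to observe that the sublinear growth of $x(s)$ still forces $r(s)$, and hence $|z_1|$, to be super-polynomial.
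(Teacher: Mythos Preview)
Your argument is correct. Both your proof and the paper's rest on the same differential identity --- in your notation $d(\log r)/ds = 2/x(s)$, in the paper's $r = \exp\bigl(2\int_{C_7}^{x} \tau^{-1}\sqrt{1+F'(\tau)^2}\,d\tau\bigr)$, which is the same thing after the change of variables $d\tau = (1-\xi)\,ds$ and $\sqrt{1+(F')^2}=1/(1-\xi)$ --- and both feed the conclusion into the $S_1/S_2/S_3$ trichotomy of Theorem \ref{F function}. The difference is organizational. The paper argues directly: it assumes $r \leq C_6 s^{\alpha}$, passes to the $F$-parametrization to obtain inequality (\ref{crucial inequality}), and then analyzes that inequality via L'H\^opital to force first $x_0=\infty$ and then $F'$ bounded, after ruling out $S_3$ separately from the explicit formula (\ref{s for S3}). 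You argue the contrapositive and stay entirely in the $\xi/s$-parametrization, never invoking $F$: if $x(s)$ remains bounded (covering $S_3$ and the finite-$x_0$ part of $S_2$ in one stroke) you read off exponential growth of $r$ immediately, and if $x(s)\to\infty$ sublinearly you integrate $d(\log r)/ds > 2k/s$ to beat every power of $s$. Your version is slightly more elementary --- no $F$-machinery, no back-and-forth change of variables --- and makes the bounded/unbounded $x_0$ dichotomy cleaner; the paper's version has the minor advantage of sitting naturally inside the $F$-function framework used throughout Section 3. One small point of phrasing: when you write ``suppose $\omega\notin S_1$'' you are implicitly also excluding the flat metric $g_e$, which is harmless since $g_e$ already has Euclidean volume growth and hence does not arise in the contrapositive.
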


\begin{proof}[Proof of Proposition \ref{holomorphic function}] Assume some
coordinate function $z_i$ for some $1 \leq i \leq n$ has polynomial
growth with respect to $\omega$ in $\overline{\mathcal{M}}_{n}$. it
follows from $\omega$ being rotationally symmetric that there exists
some integer $\alpha$ and constant $C_6>0$ such that:
\begin{equation}
r=|z|^2 \leq C_6 s^{\alpha}.   \label {polynomial growth}
\end{equation}

From Theorem \ref{F function} it suffices to show that $\omega \in S_1$, namely
$F'(x)$ bounded when $x$ goes to $x_0$. First we note that $\omega$
can not be from $S_3$ from the explicit formula (\ref{s for S3}) on the
distance with respect to metrics in $S_3$ given in Theorem 3.1.

Plugging (\ref{distance and volume}) into (\ref{polynomial growth})
leads to:
\begin{equation}
r\leq C_6 (\int_{0}^{x} \sqrt{1+(F'(\tau))^2} d \tau)^{\alpha}.
\label {polynomial growth 2}
\end{equation}

Note that:
\begin{equation}
\frac{dr}{dx}=\frac{2r}{(1-\xi)x}=\frac{2r \sqrt{1+(F'(x))^2}}{x}.
\label{equation r}
\end{equation}
Solve $r$ in terms of $x$ from (\ref{equation r}) and plug into
(\ref {polynomial growth 2}):
\begin{equation}
e^{2\int_{C_7}^{x} \frac{\sqrt{1+(F'(\tau))^2}}{\tau}  d\tau} \leq
C_6 (\int_{0}^{x} \sqrt{1+(F'(\tau))^2} d \tau)^{\alpha}
\label{crucial inequality}
\end{equation} for any $C_7 \leq x < x_0$. Here $C_7$ is the value of $x$
which corresponds to $r=1$.

It is not hard to show $F'(x)$ is bounded for all $x \in (0,x_0)$
from (\ref{crucial inequality}). First we see that $x_0$ must be
infinity. Otherwise, the left hand side $e^{2\int_{C_7}^{x}
\frac{\sqrt{1+(F'(\tau))^2}}{\tau}  d\tau}  \geq  e^{\frac{2}{x_0}
\int_{C_7}^{x} \sqrt{1+(F'(\tau))^2}  d\tau}$. then (\ref{crucial
inequality}) could not be true since exponential functions can not
be controlled by any polynomials when $\int_{C_7}^{x}
\sqrt{1+(F'(\tau))^2} d\tau$ goes to infinity. Next we show that
$F'(x)$ is bounded for all $x \in (0,+\infty)$. It follows from
(\ref{crucial inequality}) that
\begin{equation}
\frac {2\int_{C_7}^{x} \frac{\sqrt{1+(F'(\tau))^2}}{\tau}  d\tau}
{\alpha \ln{\int_{0}^{x}  \sqrt{1+(F'(\tau))^2} d\tau }  +\ln C_6 }
\label {quantity}
\end{equation} should be bounded when $x$ tends to infinity.

It is easy to see that (\ref {quantity}) has a limit when $x$ goes
to infinity.
\begin{equation}
\lim_{x \rightarrow +\infty}  \frac {2\int_{C_7}^{x}
\frac{\sqrt{1+(F'(\tau))^2}}{\tau} d\tau} {\alpha \ln{\int_{0}^{x}
\sqrt{1+(F'(\tau))^2} d\tau }  +\ln C_6 } =\frac{2}{\alpha}
\sqrt{1+(\lim_{x \rightarrow +\infty} F'(\tau))^2},
\end{equation} which implies that $F'(x)$ is bounded for all $x$ in
$[0,+\infty)$. Therefore $\omega \in S_1$. \end{proof}

\vskip 1mm

\begin{remark}
See \cite{Ni1}, \cite{ChenZhu1}, \cite{ChenZhu2}, \cite{Ni2},
\cite{NiTam}, and \cite{Ni3} for some results on the geometry of general
complete noncompact K\"{a}hler manifolds with nonnegative
bisectional curvature. Some further generalizations will also appear
in a separate paper by the author.
\end{remark}

\noindent\textbf{Acknowledgments.} The author thanks Professor Lei
Ni for making Wu and Zheng's paper \cite{WZ} available to him,
bringing Question \ref{Kahler category} to his attention, as well as
many helpful discussions during the preparation of this paper. The
author also thanks Professor Bennett Chow and Professor Xiaohua Zhu
for helpful comments and Professor Fangyang Zheng for his interest
in this work.

\end{document}